\theoremstyle{plain}
\newtheorem{dfn}{Definition}
\newtheorem{thm}[dfn]{Theorem}
\newcommand{\R}{\mathbb{R}}
\newcommand{\N}{\mathbb{N}}
\newlength{\mathindent}
\newtheorem{supptheorem}{Theorem}[section]
\newtheorem{suppcor}[supptheorem]{Corollary}
\newtheorem{supprem}[supptheorem]{Remark}
\newtheorem{suppdfn}[supptheorem]{Definition}
\begin{document}

\preprint{APS/123-QED}

\title{Ensemble Reservoir Computing for Physical Systems}

\author{Yuma Nakamura}
\affiliation{Division of Mathematical and Physical Sciences, Kanazawa University, Japan}
\email{nakamurayuma@stu.kanazawa-u.ac.jp}
\author{Tomoyuki Kubota}
\affiliation{Graduate School of Information Science and Technology, The University of Tokyo, Japan}%
\affiliation{Next Generation Artificial Intelligence Research Center (AI Center), The University of Tokyo, Japan}
\author{Yusuke Imai}
\affiliation{Graduate School of Information Science and Technology, The University of Tokyo, Japan}%
\author{Sumito Tsunegi}
\affiliation{National Institute of Advanced Industrial Science and Technology, Japan}
\author{Hirofumi Notsu}
\affiliation{Faculty of Mathematics and Physics, Kanazawa University, Japan}
\author{Kohei Nakajima}
\affiliation{Graduate School of Information Science and Technology, The University of Tokyo, Japan}
\affiliation{Next Generation Artificial Intelligence Research Center (AI Center), The University of Tokyo, Japan}

\begin{abstract}
  Physical computing exploits unconventional physical substrates to overcome limitations such as the high energy consumption inherent in digital computation.
  However, intrinsic noise and temporal fluctuations (e.g., oscillations) generally deteriorate computational performance.
  Here, we propose ensemble reservoir computing (ERC), a novel framework that employs ensemble averaging of spatially multiplexed systems to achieve robust information processing despite noise and temporal fluctuations.
  First, we prove that ensemble averaging in ERC eliminates temporal fluctuations and noise from dynamical states under certain conditions, thereby restoring computational performance to its noise-free level.
  Next, we show that ERC not only removes the noise and fluctuations but also actively exploits the computational capabilities that conventional reservoir computing (RC) leaves unutilized.
  This computational enhancement is demonstrated across diverse dynamical systems (e.g., periodic, chaotic, and strange-nonchaotic systems), in which ERC outperforms conventional RC.
  Finally, using energy-efficient spin-torque oscillators (STOs), we demonstrate that ERC maintains high performance even under realistic conditions, in which noise and temporal fluctuations coexist: STOs with ERC achieved 99\% accuracy on an error detection test, where conventional STO reservoir with linear regression only shows a chance level performance, highlighting ERC's robustness and performance gains for physical systems.
\end{abstract}

\maketitle
\section{Introduction}
The development of computing devices using unconventional materials has emerged as a key strategy to overcome the limitations of conventional computers, particularly their high energy consumption. 
This challenge has become especially pressing in the context of artificial intelligence~\cite{Momeni2025TrainingPNN}.
The rapid increase in computational demand has led to estimates that, by 2026, global AI power consumption will rival the total electricity usage of Japan~\cite{Energy_comsumption}. Among promising alternatives, physical computing substrates such as spin-torque oscillator (STO)-based devices exhibit substantially higher energy efficiency than semiconductor technology~\cite{Nature_quantum}.
Computing devices built from these materials can operate at only $0.1$\% of conventional computer power consumption~\cite{materials}; however, they inherently exhibit noise and system-specific variability (e.g., nonstationarity and periodicity), making the management of these disturbances essential for reliable operation.
\par
To date, the interfering factors in computational systems have been managed in two ways.
First, error management techniques have been developed to handle noise.
For example, in digital and quantum computers, errors are corrected by majority voting over multiple signal copies (a scheme known as the repetition code~\cite{fudamental_err_code,quantum_computer,quantum_theory}), which improves robustness against noise.
Second, temporal fluctuations, which reflect time-dependent system dynamics, are eliminated to achieve stable outputs.
For example, digital computers suppress transistor transient responses by waiting briefly before sampling, thereby obtaining reliable binary results~\cite{harris2021digital}.
In some control applications, angular velocity is estimated through time integration of angular acceleration, which causes errors to accumulate.
To counteract this nonstationary increase, an error model is estimated and removed from the measurement.
Moreover, neuromorphic oscillators (e.g., STOs) retain intrinsic oscillations even under external input, oscillations that are removed using postprocessing techniques such as envelope extraction~\cite{Huang1998}.
However, because these fluctuations may encode processed input, they could potentially be harnessed to improve the computational capabilities of these systems~\cite{GRC,TIPC,Kubota_ipc}.
\par
Physical reservoir computing (PRC) is an excellent approach for building physical computing systems using unconventional materials~\cite{materials}. In PRC, we apply input to a physical system and attach a readout, allowing the framework to be implemented across a range of physical substrates (e.g., quantum, optical, electronic, and nanomaterial systems~\cite{Fundamental_Reservoir,Spin_Reservoir,Nature_quantum,Tsunegi_2023,Akashi_2022,Akashi_2020, Yamaguchi_2020,Yamaguchi_2020_jun,Tsunegi_2019,Tsunegi2018MemoryCapacitySTO,Taniguchi2021ReservoirSpintronics}). 
A problem is that the conventional PRC framework cannot
remove the effects of noise and system fluctuations, which prevent stable output production and a full use of their computational capabilities.
To overcome this problem, one solution is to introduce noise reduction transformation to the readout layer~\cite{GRC}, which can successfully remove the negative effects of the system state to utilize the latent computational capabilities of the hopeful devices.
However, since this transformation is not universal across systems, the design of the readout function remains a significant issue.
\par
\begin{figure*}
  \includegraphics[width=\textwidth]{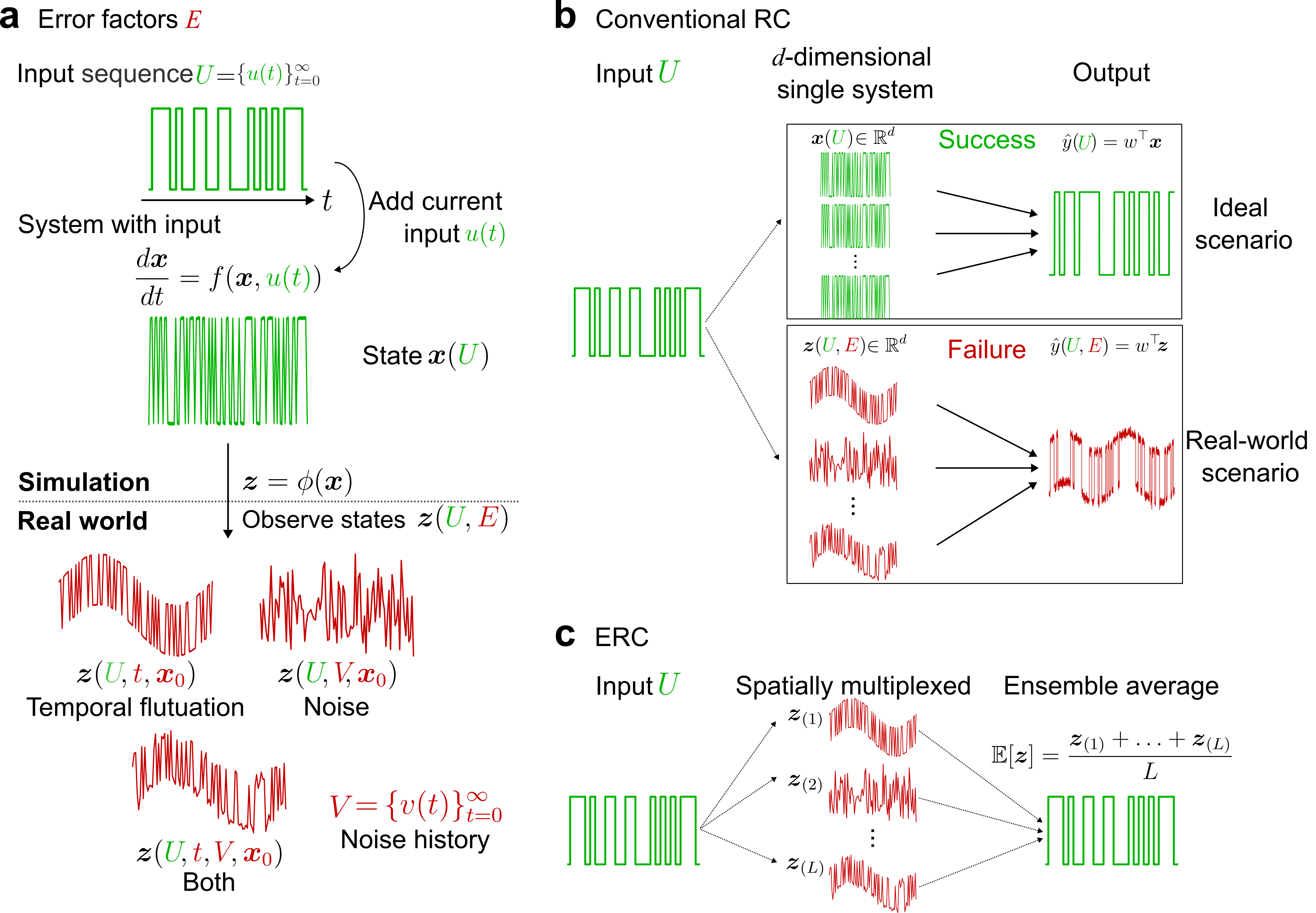}
  \caption{
  \textbf{Schematics of ensemble reservoir computing (ERC).}
  \textbf{a}) Examples of error factors $E$ in a physical system.
  In simulations, the physical reservoir state $\boldsymbol{x}$ treated as an ideal function without error factors $E$ (green).
  In real-world implementations, the dynamical state is observed as the measured state $\boldsymbol{z} = \phi(\boldsymbol{x})$, a nonlinear transformation of the original state $\boldsymbol{x}$ through $\phi$ that contains error factors $E$ (i.e., noise and temporal fluctuations; red).
  The green and red lines represent the ideal physical system and the observed state, respectively.
  \textbf{b}) Illustration of an ideal conventional RC scenario (upper) versus its practical limitations in the real world (lower).
  Conventional RC makes predictions using a linear sum of states that depends solely on input history.
  However, in practice, the error factors $E$ distort the observed states in physical systems.
  \textbf{c}) The operating principle of ERC. Parallel identical systems driven by a common input yield multiple observed states $\boldsymbol{z}_{(i)}~(i=1,2,\ldots,L)$.
  The ensemble average $\mathbb{E}[\phi(\boldsymbol{x})]$ over the observed states eliminates the error factors (i.e., noise and temporal fluctuations).
  }
  \label{fig:schematics}
\end{figure*}
In this paper, we propose ensemble reservoir computing (ERC), a novel framework that eliminates noise and fluctuations by ensemble averaging over parallel systems instead of system-specific nonlinear transformations, and we show that it enables the construction of physical computing systems.
As a result, ERC not only removes noise but also extracts latent computational capability from temporal fluctuations. We theoretically prove that, under certain assumptions, ERC completely eliminates dependence on noise and temporal fluctuations.
Furthermore, we demonstrate its effectiveness in systems with noisy and fluctuating dynamics (i.e., chaotic systems and STO-based devices), which conventional PRC cannot reliably exploit. For these reasons, ERC paves the way for reliable and high-performance computing in an unprecedented range of real-world, noisy, and temporally complex systems.
\section{Results}
\subsection{Ensemble reservoir computing}
We first introduce the dynamical system that forms the basis of the ERC framework.
We describe a state equation of the dynamical system that is driven by input $u(t)$, as follows:
\begin{equation*}
  \frac{d\boldsymbol{x}}{dt} = f(\boldsymbol{x}, u(t)),
\end{equation*}
where $\boldsymbol{x}$ denotes the $d$-dimensional system state.
In practice, not all components of $\boldsymbol{x}$ can be observed directly; instead, a subset is observed as $\boldsymbol{z}$, which may be nonlinearly transformed and is defined by the following observation equation:
\begin{equation*}
  \boldsymbol{z}=\phi(\boldsymbol{x}),
\end{equation*}
where $\phi$ is the observable function.
Temporal fluctuations can arise through the time evolution of the state equation (e.g., limit cycle and chaos).
As shown in Fig.~\ref{fig:schematics}a, a physical system can be regarded as receiving both dynamical and observational noise.
\par
Such fluctuations and noise cannot be ignored in the construction of a computational system.
In this study, we specifically consider dependencies on noise, time, and initial values and denote them collectively by $E$. 
Consequently, $\boldsymbol{x}$ and $\boldsymbol{z}$ can be regarded as functions of $(U, E)$, where $U:=\{u(t)\}_{t\geq 0}$ denote the input history. We call the system {\it time-variant} when the state retains factors of $E$; otherwise, it is {\it time-invariant}. From now on, the term temporal fluctuations or simply fluctuations is used to express the time-variant factor intrinsic to the system, which is different from external noise and independent of input.
\par
In conventional reservoir computing (RC), the output is obtained through a linear summation, which serves as the theoretical foundation for PRC; therefore, the time-variant state cannot produce the appropriate output (Fig.~\ref{fig:schematics}b).
Generalized reservoir computing (GRC)~\cite{GRC} mitigates this problem by converting the time-variant states into time-invariant through a nonlinear transformation inserted between the dynamical state and the linear readout.
Note that this nonlinear transformation can be as simple as a second-order polynomial, although its effectiveness depends on type of system. To determine the transformation, GRC with a multi-layer perceptron (MLP) readout (MLP-GRC) was previously adopted as an example, as it is outstanding at learning input-output relations~\cite{Rumelhart1986,Goodfellow2016DeepLearning}.
\par
To overcome the influence of error factors $E$ and effectively extract computational capability from contaminated dynamics, we incorporate ensemble averaging~\cite{Thomas2016Ensemble} into RC.
We prepare a set of parallel identical systems receiving common inputs~\cite{PhysRevApplied.11.034021}, referred to as spatially multiplexed systems, and take their weighted ensemble average $\mathbb{E}[\phi(x_k)]$, where $\phi(x_k)$ represents the $k$-th component of the observed state (Fig.~\ref{fig:schematics}c).
The output $\hat{y}_t$ is then computed by a linear summation of the weighted ensemble-averaged state, as follows:
\begin{equation*}
  \hat{y}_t:=\sum_{k=1}^{d}\alpha_k\mathbb{E}[\phi(x_k)],
\end{equation*}
where $\{\alpha_k\}_{k=1}^{d}$ is obtained by minimizing the error between the target signal $y_t$ and $\hat{y}_t$.
We call this framework ERC.
\par
Next, we prove that $\mathbb{E}[\phi(x_k)]$ eliminates noise and temporal fluctuations. The $l$-th system evolves independently with its own initial value $\boldsymbol{x}_0^{(l)}$ for $l = 1, 2, \ldots$.
Here, the superscript $l$ denotes the system index, and each $\boldsymbol{x}_0 \in\mathbb{R}^d$ represents a $d$-dimensional initial state.
We define the ensemble average of the observed state $\phi(x_k)$ for identical systems differing only in their initial states as
\begin{equation}
  \mathbb{E}[\phi(x_k)] \coloneqq \lim_{L \to \infty}\sum_{l=1}^L \omega_l^{(L)} \phi \left( x_k(U, E^{(l)}) \right),
  \label{eq:initial_value_average}
\end{equation}
where $\{\omega_l^{(L)}\}_{l=1}^{L}$ denote the weights.
In this paper, this operation is termed the weighted ensemble average transformation.
The following theorem establishes that this transformation eliminates explicit dependence on the error terms and yields a time-invariant quantity.
\begin{thm}\label{thm:thm1}
Let $\{\omega_l^{(L)}\}_{l=1}^{L}$ be weights that satisfy certain conditions {\rm (}where $\omega_l^{(L)} = 1/L$ for $l = 1,\ldots, L$ is a typical choice{\rm )}. Then, the ensemble-averaged quantity $\mathbb{E}[\phi(x_k)]$ is time-invariant in the following two cases:
\begin{itemize}
  \item[{\rm(}i{\rm)}] the error factor is only i.i.d. noise,
  \item[{\rm(}ii{\rm)}]
  the error factor represents temporal fluctuations, and $x_k$ admits a Fourier series representation with input-dependent coefficients.
  In this case, $\phi$ is assumed to be expandable in a Fourier or power series.
  \end{itemize}
  {\rm (See Theorem S.~1 and Corollary S.~2 in the Supplementary Information for further details; proof strategies for each case are outlined in Cases I and II below.)
  In this way, ERC eliminates the influence of the error factors $E\colon$ initial value dependence, temporal fluctuations, and noise.
  Intuitively, the parallel systems share the input but contain independent signals such as noise and/or part of fluctuations.
  Theorem 1 explains that the ensemble average removes the independent signals and thus makes functions of only input history.
  For practical implementation, we use the typical weights $\omega_l^{(L)} = 1/L$, $l = 1, \ldots, L$, and refer to the weighted ensemble average transformation simply as the ensemble average transformation.
  }
\end{thm}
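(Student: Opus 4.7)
The overall plan is to show that each of the two cases corresponds to the weighted ensemble average projecting $\phi(x_k)$ onto a component that depends only on the input history $U$, thereby erasing the explicit dependence on $E$. Since the mechanism is qualitatively different in the two cases (statistical for noise, harmonic for periodic fluctuations), I would treat them with independent arguments and then collect them into a single statement. In both cases the conditions on $\{\omega_l^{(L)}\}$ should play the role of a law-of-large-numbers hypothesis in case (i) and of an equidistribution hypothesis in case (ii), and the uniform choice $\omega_l^{(L)} = 1/L$ should be verified to satisfy both.

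For case (i), I would fix the input $U$ and view $\{\phi(x_k(U, E^{(l)}))\}_{l=1}^{\infty}$ as an i.i.d.\ sequence, because the noise realisations $E^{(l)}$ are independent across the spatially multiplexed systems while $U$ is shared. Under a mild integrability assumption on $\phi(x_k(U, E))$, a Kolmogorov-type strong law for triangular arrays yields almost-sure convergence of $\sum_{l=1}^{L} \omega_l^{(L)} \phi(x_k(U, E^{(l)}))$ to $\mathbb{E}_E[\phi(x_k(U, E))]$, provided the weights satisfy the standard conditions $\sum_l \omega_l^{(L)} = 1$ and $\max_l \omega_l^{(L)} \to 0$. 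The limit is manifestly a function of $U$ alone, which is the required time-invariance.

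For case (ii), I would exploit the assumed decomposition $x_k(U, t; E^{(l)}) = \sum_n c_n(U)\, e^{i\omega_n(t + \tau^{(l)})}$, where $\tau^{(l)}$ encodes the phase induced by the initial state of the $l$-th system. Substituting this into the series $\phi(y) = \sum_m a_m y^m$, the $m$-fold product expands into terms proportional to $\prod_{j=1}^{m} c_{n_j}(U)\cdot e^{i(\omega_{n_1}+\cdots+\omega_{n_m})t}\cdot e^{i(\omega_{n_1}+\cdots+\omega_{n_m})\tau^{(l)}}$. Imposing on the weights the equidistribution condition $\sum_l \omega_l^{(L)} e^{i\Omega \tau^{(l)}} \to 0$ for every nonzero $\Omega$, each nontrivial harmonic in $\tau^{(l)}$ is annihilated by the ensemble sum; only the resonant terms with $\omega_{n_1}+\cdots+\omega_{n_m}=0$ survive, and these are automatically $t$-independent and $E$-independent. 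The Fourier-series expansion of $\phi$ is handled identically after reindexing the product structure.

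The routine ingredients are the Kolmogorov law in case (i) and the frequency-selection algebra in case (ii); the main obstacle is the analytic bookkeeping in case (ii), where I must interchange the ensemble average with two nested, possibly infinite, series (the expansion of $\phi$ and the Fourier series for $x_k$). Justifying this termwise swap calls for an absolute- or dominated-convergence argument based on decay estimates for $|c_n(U)|$ and $|a_m|$ that are uniform in $l$; these estimates are exactly what dictates the precise technical content behind the phrase ``certain conditions'' on $\{\omega_l^{(L)}\}$, and pinning them down, together with checking that $\omega_l^{(L)} = 1/L$ satisfies them, is where I expect the bulk of the work to lie, presumably mirroring Theorem S.~1 in the Supplementary Information.
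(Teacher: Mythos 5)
Your case~(i) is essentially the paper's argument: the paper also splits the weighted sum (into positive- and negative-weight parts, since it allows signed weights summing to constants $C_1,C_2$) and passes to an integral $\int \phi(x(U,v))P(v)\,dv$ over the noise distribution by a law-of-large-numbers step, which is manifestly a function of $U$ alone; your Kolmogorov-triangular-array formulation is if anything more careful. Case~(ii), however, takes a genuinely different route, because you formalize the initial-condition dependence differently. The paper (Theorem S.1) assumes each harmonic carries its \emph{own} phase $\theta_j^{(l)}$, with $\{\theta_j^{(l)}\}_{j=1}^N$ i.i.d.\ uniform on $(-\pi,\pi)$ and the time dependence absorbed into general functions $f_j(t,U)$; the ensemble average then factorizes into independent integrals $\int_{-\pi}^{\pi}\cos^{l_j}(\theta_j+\eta_j)\,d\theta_j=\int_{-\pi}^{\pi}\cos^{l_j}\theta_j\,d\theta_j$, a constant, after a multinomial expansion of $x^n$ written as $\sum_j r_j\cos(\theta_j+\eta_j)+C$. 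You instead couple all phases through a single time-shift $\tau^{(l)}$ ($e^{i\omega_n(t+\tau^{(l)})}$) and kill the non-resonant harmonics with a Weyl-type equidistribution hypothesis $\sum_l\omega_l^{(L)}e^{i\Omega\tau^{(l)}}\to 0$. Both yield time-invariance, but they are not the same hypothesis: the paper's version covers input-modulated instantaneous frequencies (arbitrary $f_j(t,U)$, not just $\omega_n t$) and keeps only the fully "diagonal" even-power terms, which is what its later symmetric-polynomial analysis (Theorem S.3) relies on; your version is the natural model when all modes share a common time origin, but it retains resonant cross-terms with $\sum_j\omega_{n_j}=0$ that do not arise under mode-wise independence, and it requires uniform control of the equidistribution over the countable set of resonance frequencies. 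Also note that the paper's Fourier expansion of $x$ is \emph{finite} ($N$ terms), so only one infinite series (the expansion of $\phi$, handled by dominated convergence exactly as you propose) needs to be interchanged with the average; your doubly infinite bookkeeping is self-imposed. Neither difference is an error, but if you want to reproduce the paper's theorem as stated in the supplement you should adopt the i.i.d.-phases-per-mode hypothesis rather than the common-time-shift one.
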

\begin{figure*}[t]
  \centering
  \includegraphics[width=\textwidth]{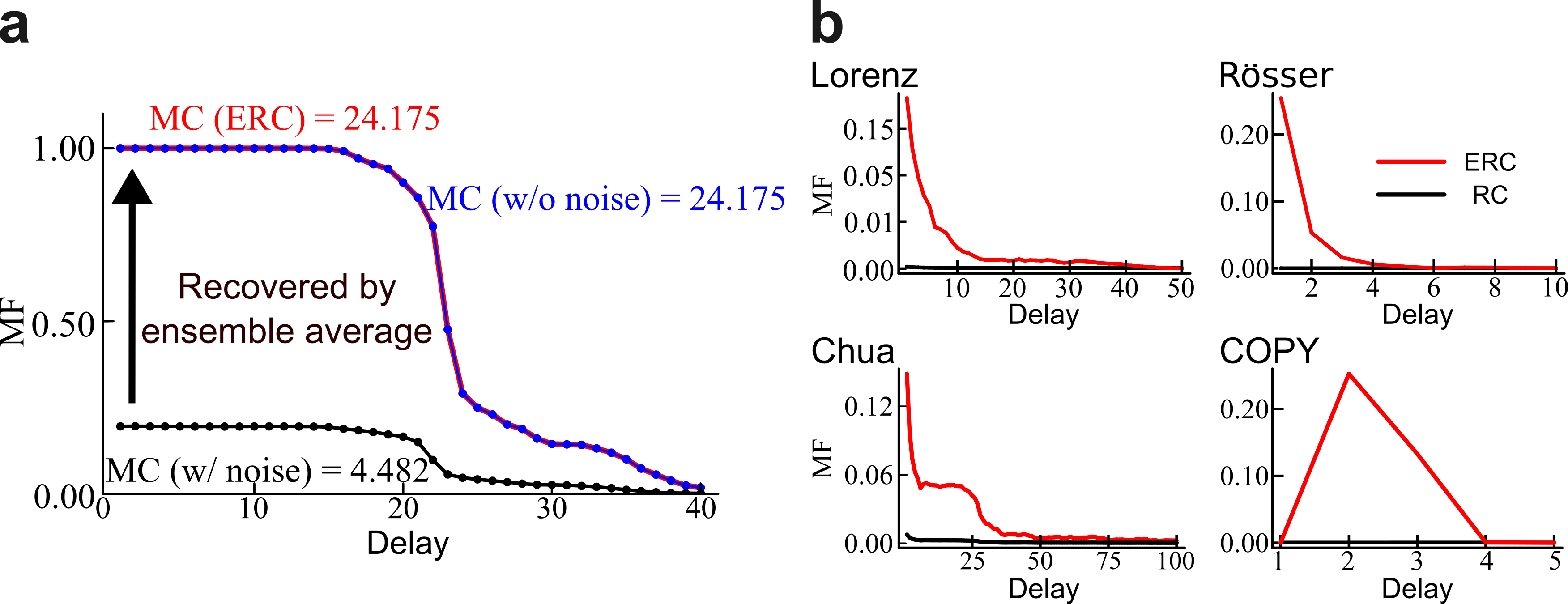}
  \caption{\textbf{Ensemble averaging restores and improves memory in time-variant systems.}
  \textbf{a}) An example of noise removal using an echo state network (ESN).
  Three memory functions (MFs) and their memory capacities (MCs) are shown: a standard ESN (blue), an ESN with noise (black), and an ESN with ERC (red).
  To numerically verify Theorem \ref{thm:thm1}, we used $L=5\times10^5$ parallel ESNs with a network size $d=30$ and spectral radius $\rho=0.94$.
  \textbf{b}) Four examples that exploit temporal fluctuations: the Lorenz system, R\"{o}ssler system, Chua circuit, and COPY map.
  The MFs obtained using conventional RC (black) and ERC (red) are illustrated.
  Their MCs under ERC are $1.08$ (Lorenz), $0.7$ (R\"{o}ssler), $1.57$ (Chua), and $0.37$ (COPY).
  }
  \label{fig:MC_ERC's}
\end{figure*}
\vspace{-1.5\baselineskip}
\subsubsection{Case I : Noise tolerance}
First, to examine the noise tolerance, we consider a system driven by input $u_m$ and noise $v_m$, where the noise is independent of time and initial values.
We assume that the realizations $v^{(l)}_m \in [a,b]$ vary across trials.
Then, by the law of large numbers, $\mathbb{E}[\phi(x_k)]$ in Eq.~\eqref{eq:initial_value_average} is equivalent to integrating over $v\in[a,b]$, which is proven to eliminate the influence of noise (see Theorem S.~1 in the Supplementary Information).
\par
To numerically verify the noise tolerance of the ensemble average transformation, we use an echo state network (ESN)~\cite{ESN_jeager} with noise, defined as follows:
\begin{equation}
  \boldsymbol{r}_{m+1} = \tanh\left(W\boldsymbol{r}_m +\alpha W_{\rm in} u_m + \sigma W_{\rm in} v_m^{(l)}\right),
  \label{eq:noisy_ESN}
\end{equation}where $\boldsymbol{r}_m\in\mathbb{R}^{d}$ is the ESN state at time step $m$, $W \in \mathbb{R}^{d \times d}$ is a fixed recurrent weight matrix with spectral radius $\rho=0.94$, and $W_{\rm in} \in \mathbb{R}^{d}$ is a fixed input weight vector.
The input $u_m$ is sampled uniformly from $[0,1]$, and the noise $v^{(l)}_m$ from $[-1,1]$.
In the absence of noise $(\sigma=0)$, the state $\boldsymbol{r}_m$ becomes time-invariant~\cite{ESN_jeager}.
To evaluate the removal of noise in ERC, we use the memory function (MF), which measures the contribution of past inputs $u_{m-\tau}$ to the system state.
We denote the MF at delay $\tau$ by $M_{\tau}$ and define the memory capacity (MC)~\cite{MC_jeager} as $\sum_{\tau} M_{\tau}$ ( see Methods for further details).
 Figure~\ref{fig:MC_ERC's}a compares the MF of a standard ESN (blue), an ESN with noise (black), and an ESN with ERC (red).
 To focus on the ensemble average transformation, without nonlinear processing, we use the linear observation function $\phi(x) = x$.
In the absence of noise ($\sigma=0$), the MC is close to $24$. With noise ($\sigma\neq0$), the MC drastically decreases by 81\%, suggesting that noise significantly degrades the system's computational capability.
However, the ensemble average transformation restores the MC to its original level, even when the system is driven by both input and noise.
This result indicates that ERC can remove noise from contaminated dynamical states and thereby greatly improve computational performance.
\subsubsection{Case II : Temporal-fluctuation tolerance}
Another adverse factor is temporal fluctuation arising from time-dependent dynamics, such as periodic or chaotic behavior.
To examine tolerance to such fluctuations, we describe the system state using a Fourier series with coefficient functions that depend on input history.
Here, $\{f_j(t,U)\}_{j=1}^{N_{*}}$ are general functions of time $t$ and input history $U$, while the random phases $\theta_j$, drawn independently from $[-\pi,\pi]$, represent the dependence on the initial conditions.
We then assume that the state $x$ is represented by the following finite series:
\begin{equation}
  \scalebox{0.95}{$
  \displaystyle
  x(t,U,\boldsymbol{x}^{(l)}_0)\simeq
  \sum_{j=1}^{N_{*}}\left[a_j\cos(f_j+\theta^{(l)}_j)+
  b_j\sin(f_j+\theta^{(l)}_j)\right]+
  C,
  $}
  \label{eq:basic_trigometric}
\end{equation}
where $a_j=a_j(U)$, $b_j=b_j(U)$, and $C=C(U)$ depend only on the input history.
The ensemble average transformation eliminates the time-dependent factors $\{f_j\}_{j=1}^{N_{\ast}}$ because it corresponds to an integration over all phases $\theta_1, \ldots, \theta_{N_{\ast}}$; for instance, $\int_{-\pi}^\pi \cos^k(\theta+\eta)\, d\theta$ yields a constant for $\eta=\eta(t,U)$ and $k\in\N$.
As a result, the transformed state $\mathbb{E}[\phi(x)]$ becomes independent of $t$ and $\{\theta_j\}_{j=1}^{N_{\ast}}$, making it useful for RC.
\par
To demonstrate the effectiveness of the ensemble average transformation, we examine it analytically in a model system. We consider the Stuart–Landau system with input applied in the radial direction, described as follows:
\begin{figure*}[t]
  \includegraphics[width=\textwidth]{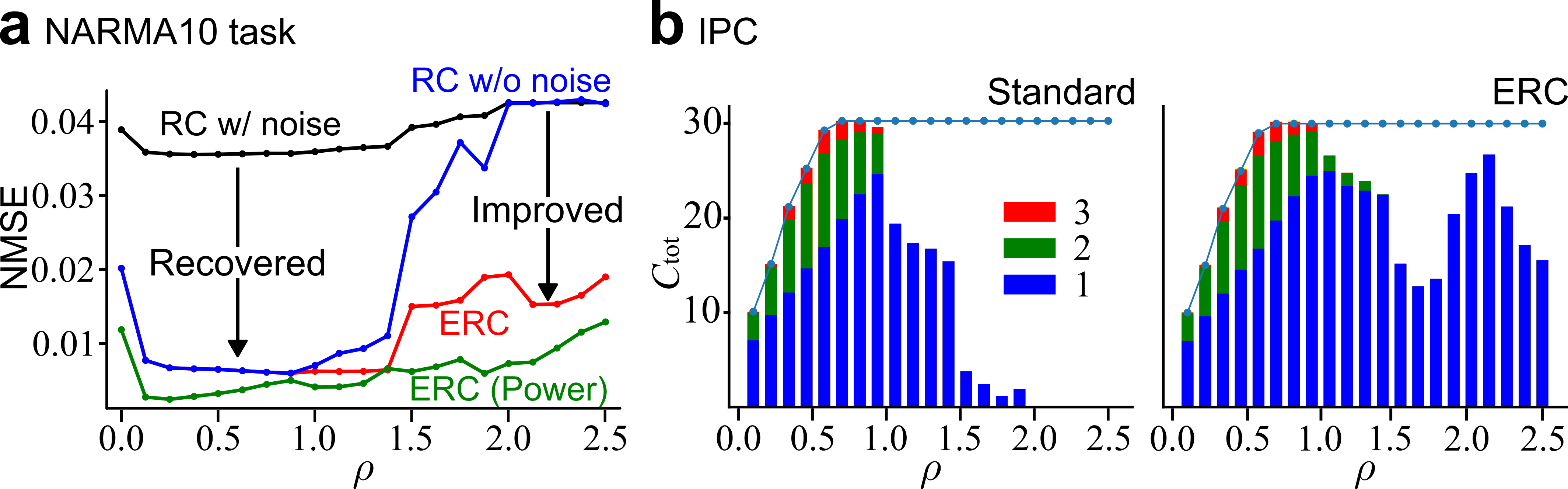}
  \caption{
  \textbf{Application of ERC to a} time-variant ESN.
  \textbf{a}) Prediction errors (NMSE) on the NARMA10 task as a function of $\rho$ for four configurations: a standard ESN without noise (blue), an ESN with additive noise (black), ERC with first-order ensemble average (red), and ERC with ensemble averages of powers $1$–$10$ (green).
  \textbf{b}) Information processing capacity (IPC) as a function of $\rho$ for the standard ESN (left) and ERC (right). The bars indicate the sum of $d$th-order IPCs (1st [blue], 2nd [green], and 3rd [red]), while the lines indicate the state rank.
  }
  \label{fig:ESN_robust}
\end{figure*}
\begin{equation}
  \begin{split}
    &\dot{x}=\alpha x-\beta y-x(x^2+y^2)+\sigma \frac{x}{\sqrt{x^2+y^2}}u,\\
    &\dot{y}=\beta x+\alpha y-y(x^2+y^2)+\sigma \frac{y}{\sqrt{x^2+y^2}}u,\\
  \end{split}
  \label{eq:radiue_input_hopf_inP15}
\end{equation}where $\alpha > 0$ represents the radius of the limit cycle and $\beta \in \mathbb{R}$ is the natural frequency of the system.
The dependence on the initial value is expressed as an initial phase $\theta$, and the solutions of Eq.~\eqref{eq:radiue_input_hopf_inP15} can be written as $
(x,y) = \left( r(U) \cos(\beta t + \theta),\ r(U) \sin(\beta t + \theta) \right)$, which satisfy the assumption of Eq.~\eqref{eq:basic_trigometric} with $N_{*}=1$ ($a_1=b_1=r(U)$, $f_1=\beta t$, and $C=0$).
The corresponding ensemble averages with $\phi(x)=x^n$ are derived as follows:
\begin{equation}
  \scalebox{0.99}{$
  \mathbb{E}[x^n]=\mathbb{E}[y^n]=
  \left\{
  \begin{array}{cc}
  \displaystyle 0 & (n=1,3,5,\ldots)\\
  \displaystyle \binom{n}{n/2}\left(\frac{r}{2}\right)^{n} & (n=2,4,6,\ldots)
  \end{array}
  \right.,
  $}
  \label{eq:phi_SLsystem}
\end{equation}
where $r = r(U)$ depends only on the input history, indicating that $\mathbb{E}[x^n]$ and $\mathbb{E}[y^n]$ are time-invariant for all $n\in \mathbb{N}$.
This result shows that although the original solution contains only time-varying quantities, the ensemble average transformation extracts time-invariant terms, demonstrating that systems with temporal fluctuations can serve as computational resources.
\par
 We identify two limitations of ERC.
First, ERC is effective when there is no spatial synchronization among the states of spatially multiplexed systems. Conventional RC requires dynamical states to synchronize with the input to yield a reproducible output, which is opposite to the requirement for ERC (see Supplementary Information for further details). 
Second, we introduce the information processing capacity (IPC) to characterize the input information embedded in time-invariant quantities (see Methods for further details). The IPC extends the MC by incorporating nonlinear dependencies and is quantified using coefficients from an orthogonal-polynomial expansion of the input history. For high-dimensional states or large $N_{\ast}$, the corresponding bases can be identified individually, and once identified, any function expressed in this basis can also be determined (see Supplementary Information for further details).
\subsection{Extension of the admissible dynamics and observation functions}
\begin{figure*}[t]
  \includegraphics[width=\textwidth]{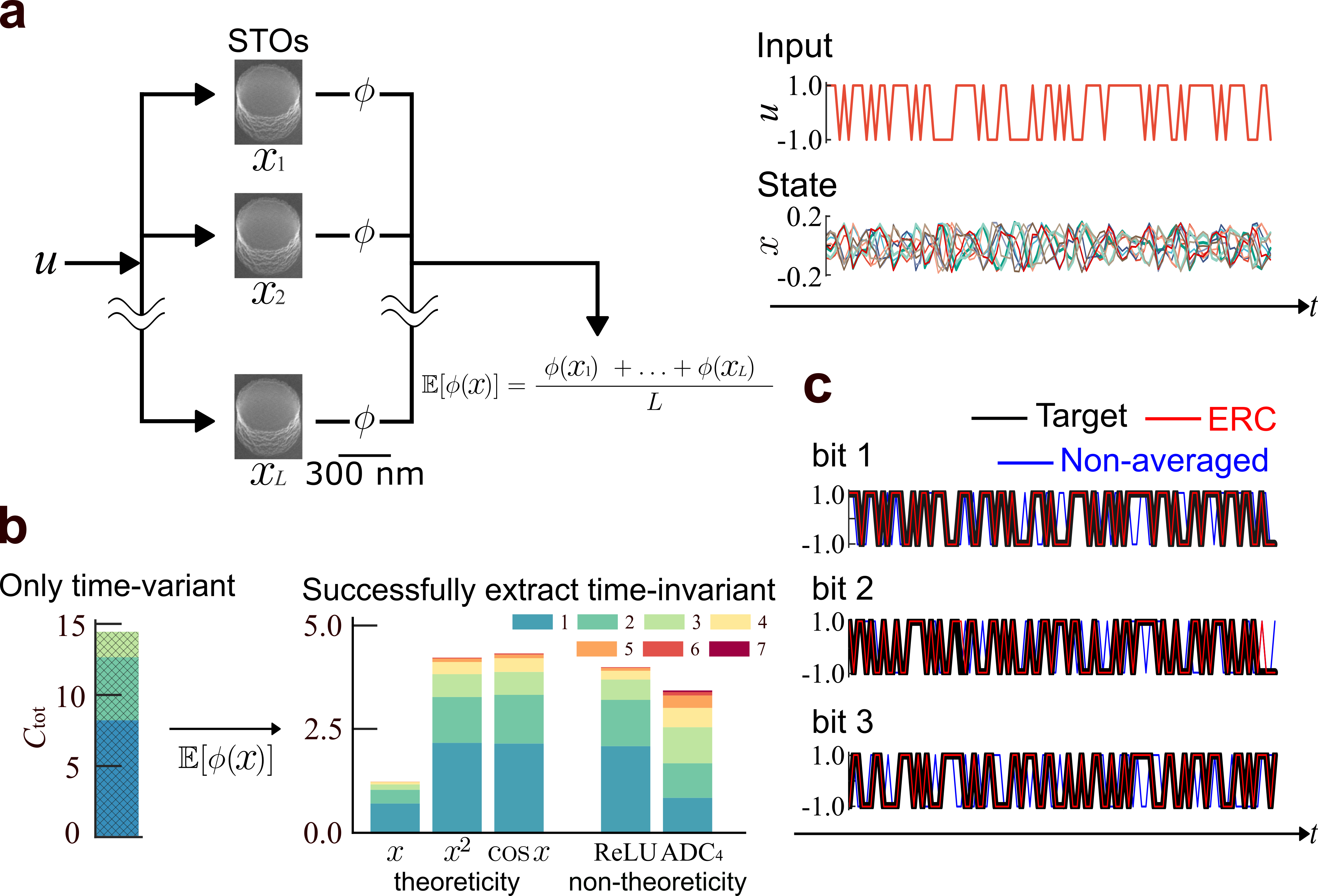}
  \caption{
  \textbf{Application of ERC to a} real spin-torque oscillator (STO).
  \textbf{a}) Schematic illustration of ERC applied to $L=60$ STOs (left).
  The spatially multiplexed STOs are illustrated using SEM images of the resist-pillar patterns before ion milling; these pillars serve as etching masks for the STO nanopillars.
  We inject the same binary input signal (upper right) to into all STOs and observe their resulting time series (lower right).
  \textbf{b}) The temporal information processing capacity (TIPC) of the original STO waveform and the IPC of the ensemble-averaged states using the observation functions $\phi\in\{x,x^2,\cos x,\mathrm{ReLU}(x),\mathrm{ADC}_{4}(x)\}$. 
  The TIPC of the original waveform includes only time-variant capacities. 
  In contrast, transformed states with several $\phi$ contain time-invariant components. Here, $x,x^2$, and $\cos x$ are theoretically admissible, while $\mathrm{ReLU}(x)$ and $\mathrm{ADC}_{4}(x)$ are not theoretically guaranteed.
  The color of each bar indicates the input order. The hatched and unhatched bars denote time-variant and time-invariant components, respectively.
  \textbf{c}) Time-series comparison among the target (black), non-averaged (blue), and ERC (red) signals for a cyclic redundancy check (CRC) task.
  We use $\phi\in\{x^2,x^4,x^6,x^8\}$ as observation functions to solve the CRC task.}
  \label{fig:equippment_data}
\end{figure*}
Next, to broaden the applicability of ERC, we consider
two cases: (i) system states cannot be represented by the series expansion in Eq.~\eqref{eq:basic_trigometric} and (ii) systems with both time and noise dependencies.
As an example of case (i), we consider systems with a fractal structure, including three chaotic systems (the Lorenz, R\"{o}ssler, and Chua models), and a strange-nonchaotic system the COPY map~\cite{SNA} ( see the Models subsection of Methods for details).
It is unclear whether any of these systems can be expressed in the form of Eq.~\eqref{eq:basic_trigometric}.
We examined the ability of ERC to extract latent computational capabilities from such systems by evaluating their MFs.
As shown in Fig.~\ref{fig:MC_ERC's}b, although the MFs of the original states provide values only on the order of $10^{-4}$ (black), the ensemble average transformation recovers meaningful memory (red).
Using ERC, we obtained MCs of 1.08 (Lorenz), 0.70 (R\"{o}ssler), 1.57 (Chua), and 0.37 (COPY), where the nonlinear functions used were $\phi(x)=x$, $\phi(x)=\log(x^2)$, $\phi(x)=x$, and $\phi(x)=\exp(x^2)$, respectively.
For comparison, the Lorenz and R\"{o}ssler systems are reported as application examples of MLP-GRC~\cite{GRC}, with MCs of 0.97 for both.
These results indicate that ERC can extract information from time-variant systems without requiring complex nonlinear transformations, such as MLPs, and that it remains effective even without satisfying Eq.~\eqref{eq:basic_trigometric}.
\par
As examples of case (ii), we consider the noisy ESN defined in Eq.~\eqref{eq:noisy_ESN}.
We provide an overview of the dynamical properties of the ESN to show that ERC can leverage dynamics in the time-variant regime.
We identify the parameters that govern the time-variant and the time-invariant regimes in the ESN by sweeping the spectral radius $\rho$ of $W$ to span different dynamical regimes.
The ESN without input exhibits fixed-point dynamics for $\rho < 1$, a periodic window around $\rho \approx 1$, and chaotic behavior for $\rho > 2$. Furthermore, including external input and noise leaves this bifurcation structure almost unchanged (see Fig.~S2 in the Supplementary Information for further details).
Thus, the ESN is time-invariant for $\rho < 1$ and time-variant for $\rho > 1$.
\par
To demonstrate the effectiveness of ERC in the noisy time-variant regime, we evaluated its performance using the standard nonlinear auto-regressive moving average of order
10 (NARMA10) benchmark and the IPC metric (see Methods for details).
 Figure~\ref{fig:ESN_robust}a shows the NARMA10 results for a standard ESN (blue), a noisy ESN (black), ERC with $\phi(x)=x$ (red), and ERC with $\phi\in \{x,\ldots,x^{10}\}$ (green).
In the time-invariant regime ($\rho<1$), the standard ESN shows high performance.
 In contrast, performance deteriorates sharply in the time-variant regime ($\rho>1$).
After noise injection, performance drops for all values of $\rho$.
However, ERC fully restores accuracy lost to noise in the $\rho<1$ regime and consequently outperforms the standard ESN when $\rho>1$.
In the time-invariant regime ($\rho<1$), ERC matches the IPC of the standard ESN.
In the time-variant regime, ERC extracts more IPC than the standard ESN, suggesting that ensemble averaging converts time-variant terms into time-invariant components (Fig.~\ref{fig:ESN_robust}b).
Moreover, applying different nonlinear transformations expands the set of reservoir states, leading to improved performance.
These results indicate that ERC provides a flexible framework for implementing PRC across a wide range of physical systems.
\subsection{Physical implementations}
Finally, to demonstrate applicability of ERC to physical systems, we adopted STOs, since the Stuart--Landau model in Eq.~\eqref{eq:radiue_input_hopf_inP15} is an approximation of STO dynamics and has already demonstrated the effectiveness of ERC.
Figure~\ref{fig:equippment_data}{a}(left) illustrates the ERC procedure applied to parallel STO systems driven by binary random inputs, which were applied in the manner of two-level modulation with 150 or 300mV (see Methods for further details).
The STOs do not exhibit spatial synchronization across trials (Fig.~\ref{fig:equippment_data}a, right). This independence ensures that each oscillator provides distinct dynamics suitable for ERC.
\par
First, we evaluate ERC with the real STOs using the temporal IPC (TIPC)~\cite{Kubota_ipc,TIPC}, which generalizes the IPC by incorporating temporal dependencies so that processed inputs can be resolved not only in time-invariant states but also in time-variant states unusable in conventional RC (see Methods for further details). In this evaluation, we used not only theoretically guaranteed transformations (i.e., those whose $\phi$ can be expanded in Fourier or power series [see Theorem~\ref{thm:thm1}]) but also those without such guarantees.
Figure~\ref{fig:equippment_data}{b} (left) presents the TIPC of the original STO waveform, which contains only time-variant capacities, indicating that conventional RC cannot form a time-invariant output.
Figure~\ref{fig:equippment_data}{b} (right) illustrates only the time-invariant capacities obtained by ensemble averaging with several nonlinear functions $\phi$.
Specifically, we consider $\phi\in\{x, x^2, \cos x, \mathrm{ReLU}(x), \mathrm{ADC}_{4}(x)\}$, where $\mathrm{ReLU}(x)\coloneqq \max(0,x)$ and $\mathrm{ADC}_{4}(x)$ denotes a $4$-bit analog-to-digital transformation~\cite{max_Nakajima} (see Methods, Observation Functions, for details).
The figure also compares results obtained using theoretically admissible nonlinear transformations $\phi$ ($x$, $x^2$, $\cos x$) (the three bars on the left) and transformations not guaranteed by Theorem 1 [ReLU($x$), ADC${}_4$($x$)] (the two bars on the right).
In all cases, the time-invariant capacities are successfully recovered from the STO time series that contain only time-variant terms.
\par
Next, we evaluate the performance of ERC on a $3$-bit cyclic redundancy check (CRC) task used for error detection in signal processing (see Methods).
Figure~\ref{fig:equippment_data} c compares the target (black), non-averaged (blue), ERC-predicted (red) time-series signals for each bit.
The task is solved by ensemble averaging with nonlinear transformations $\phi \in \{x^2, x^4, x^6, x^8\}$, while the non-averaged state is used only as a transformed value. In ERC, nonlinear transformations and their ensemble averages serve as reservoir states—specifically, $\mathbb{E}[x^2]$, $\mathbb{E}[x^4]$, $\mathbb{E}[x^6]$, and $\mathbb{E}[x^8]$.
The task performance is $67.3$\% and $99.4$\% for the non-averaged and ERC cases, respectively.
Furthermore, we performed the CRC task using ERC with 2-, 3-, 4-, and 5-bit ADC($x$) as nonlinear functions, taking practical implementation into account, and achieved $94.8\%$ (see the Supplementary Information for details). ERC with STOs is further applied to Hamming code encoder and decoder error-correction tasks (see Methods).
In the encoder task, a $7$-bit signal is emulated from a $3$-bit input sequence by adding information to correct a bit-flip error. In the decoder task, a corrected $3$-bit sequence is emulated from the received $7$-bit sequence. Using the observation functions $\phi\in\{x^2,x^4,x^6,x^8\}$, we achieved $99.9\%$ accuracy for the encoder task and $89.4\%$ accuracy for the decoder task.
In addition, we compare the IPC and MC of the reservoir states used for task execution in this work with those reported in previous STO-based reservoir computing studies (see Table~S2).
 The results obtained in this work exhibit the highest MC and IPC among physically implemented STO-based reservoirs.
In the simulation work, there is a setup that shows higher MC in Ref.~\cite{Akashi_2020}, which utilizes the effect of spatial coupling. In spintronics, aside from the STO-based reservoir computing platform (e.g., spin-wave and spin-ice), similar results have been reported\cite{Namiki2023,Namiki2025}, which could be explored in our future work.
These results indicate that ERC extends beyond its theoretical domain, establishing a general computational paradigm that effectively utilizes the time-variant system as reservoirs.
\section{Discussion}
In this paper, we applied ERC to various types of systems, from which we derived three simple design rules for practical use:
(i) avoid synchronization among spatially multiplexed systems to preserve phase diversity,
(ii) employ physical systems that allow sufficient data acquisition for ensemble averaging, and
(iii) select appropriate nonlinear observables to maximize extractable information.
For example,
(i) in physical hardware, external noise at the device-level inherently suppresses spatial synchronization, ensuring the persistence of heterogeneous dynamical responses.
(ii) Parallel device architectures further provide naturally spatially multiplexed channels that can be directly exploited as reservoir nodes (e.g., optical systems~\cite{Optics_spatial_multiplexed} and coupled STOs~\cite{dejong2023virtualreservoiraccelerationcpu}).
(iii) Successful task implementation requires the reservoir state to contain sufficient task-relevant IPC, and as illustrated in Fig.~\ref{fig:equippment_data}b, varying the nonlinear transformation modifies accessible IPC; consequently, such transformations should be selected appropriately for both the physical system and the target tasks (e.g., NARMA10 and CRC).
By following these rules, ERC can be applied to a wide range of dynamical systems as reservoirs, thus widening the set of physical substrates available for computation.
\par
ERC can be regarded as a special class of linear regression. Through weighted ensemble averaging, ERC produces time-invariant representations that correspond to linear combinations of $d \times L$-dimensional reservoir states. This implies that linear regression on these states might yield similar invariance when sufficient trials are available. However, implementing linear regression with these states requires to solve a large size of linear equations. By contrast, ensemble averaging avoids solving such a large size of linear equations and therefore reduces the computational cost, making ensemble average a more practical approach.
\par
While we demonstrate the broad applicability of ERC, several theoretical issues remain. 
The optimal design of nonlinear observables $\phi$ for real physical systems has yet to be established, and the correspondence between the theoretical expansion in Eq.~\eqref{eq:basic_trigometric} and actual dynamics is still unclear. 
Moreover, ERC assumes the ensemble limit $L \to \infty$, leaving open which input-history functions can be extracted with only a finite numbers of ensembles. 
Addressing these challenges will clarify the theoretical foundation of ERC and extend its applicability to a wider range of physical substrates.
\section{Methods}
\subsection{Memory capacity task}
We introduce the concept of memory capacity (MC) to measure a system’s ability to retrieve previous linear inputs.
It is quantified by how accurately the system predicts a past input signal $u_{t-\tau}$ using a time series $X$ that includes the input $u$.
The target is given by $u_{t-\tau}$, and $\hat{y}_{t}$ is the predicted value computed from $X$.
The memory function $M_\tau$ is defined as $M_{\tau}\coloneqq {\rm cor}(u_{t-\tau},\hat{y}_{t})^2$.
Then, MC is computed as $\sum_{\tau=1}^{\infty}M_\tau$.
\par
\subsection{Models}
\subsubsection{Echo state network}
To demonstrate the framework's (i) noise tolerance and (ii) robustness to time- and noise-dependent dynamics, we employ an ESN model with $30$ nodes ($d = 30$), with both input and noise intensities set to $\alpha = \sigma = 0.01$.
As in the case of (i), we set the spectral radius to $\rho = 0.94$.
As in case (ii), we vary it from $0.0$ to $2.5$ in $21$ steps.
Each element of the weight matrix $W$ and the vector $W_{\rm in}$ in derived from a uniform distribution over $[-1, 1]$.
The number of spatially multiplexed systems is set to $L=5 \times 10^5$.
\par
The bifurcation structure of the ESN with respect to the spectral radius $\rho$ is summarized in Fig.~S2, showing transitions between ordered and chaotic regimes.
\subsubsection{Lorenz equation}
For the Lorenz, R\"{o}ssler, and Chua circuits, we use input sequences $u_t$ drawn uniformly from $[-1, 1]$.
In the COPY map, we change the input range to $[0,1]$.
The number of trials $L$ is set to $10^5$.
The nonlinear transformation $\phi$ was selected from the following functions by adopting, in each case, the one that produced the highest capacity:
\begin{equation}
\begin{split}
  &\{x,x^2,\ldots,x^5,\\
  &\cos x,\sin x,\tan x,\cos 2x,\sin 2x,\tan 2x\\
  &e^{x},e^{-x^2},\log(x^2),|x|
  \}.
  \label{eq:function_set}
\end{split}
\end{equation}
Only in the case of the COPY map, $e^{x^2}$ was considered instead of $e^{-x^2}$ in the function set in Eq.~\eqref{eq:function_set}.
\par
The Lorenz model of three states ($x,y,z$) is described by
\begin{equation}
  \begin{split}
    \frac{dx}{dt}&=-\sigma x-\sigma y+\iota u_t\\
    \frac{dy}{dt}&=x(\rho -z)-y,\\
    \frac{dz}{dt}&=xy+\beta z,\\
  \end{split}
\end{equation}
where $\sigma=10,\rho=28,\beta=8/3$, and $\iota=30$.
We solve the equations using the fourth order Runge--Kutta method with a step width of $\Delta t=0.01$.
The maximum conditional Lyapunov exponent is $\lambda_{\rm top}\approx 1.0265$.
\par
In the averaging procedure of ERC, we use the original state $(x,y,z)$, [i.e., $\phi(x)=x$] together with their ensemble-averaged values $\mathbb{E}[x], \mathbb{E}[y]$, and $\mathbb{E}[z]$ as reservoir states.
\subsubsection{R\"{o}ssler equation}
 We describe the R\"{o}ssler system by the following equations:
\begin{equation}
  \begin{split}
    \frac{dx}{dt}&=-y-z,\\
    \frac{dy}{dt}&=x+ay,\\
    \frac{dz}{dt}&=b+xz-cz+\iota u_t,\\
  \end{split}
\end{equation}
where $a=0.2,b=0.2,c=5.7$, and $\iota=0.2$.
We use the fourth-order Runge--Kutta to compute a numerical solution with $\Delta t=0.2$.\par
In the averaging procedure of ERC, we adopt the observation function $\phi(x)=\log(x^2)$.
The maximum conditional Lyapunov exponent is $\lambda_{\rm top}\approx 6\times 10^{-2}$.
\subsubsection{Chua circuit}
The Chua circuit is defined by the following equations:
\begin{equation}
  \begin{split}
    \frac{dx}{dt}&=a(y-x+f(x))+\iota u_t,\\
    \frac{dy}{dt}&=x-y+z,\\
    \frac{dz}{dt}&=-by,\\
    f(x)&=m_1x+\frac{m_0-m_1}{2}(|x+1|-|x-1|),
  \end{split}
\end{equation}
where $a=15.6,b=28.0,m_0=-1.143,m_1=-0.714,\iota=2.0$, and $\Delta t=0.05$. The observation function is $\phi(x)=x$.
The maximum conditional Lyapunov exponent is $\lambda_{\rm top}\approx 0.46$.
\subsubsection{COPY map}
The COPY map setting is defined as follows:
\begin{equation}
  \begin{split}
    &x_{n+1}=2\lambda \tanh(x_{n})\cos\theta_n,\\
    &\theta_{n+1}=[\theta_{n}+ 2\pi\omega ~ (\mbox{mod} 2\pi)]+\iota u_t,
  \end{split}
\end{equation}
where $\lambda=1.5,\omega=\frac{\sqrt{5}-1}{2}$, and $\iota=1$.
In the ERC procedure, we use only $x_n$ as the state variable.
The observation function is $\phi(x)=\exp(x^2)$.
The maximum conditional Lyapunov exponent is $\lambda_{\rm top}\approx 3\times 10^{-4}$.
\subsubsection{NARMA10 task}
The NARMA10 is a standard benchmark to evaluate the prediction accuracy of PRC.
Let \( u_t \) denote the input signal.
 Then, the NARMA10 output $y_t$ evolves according to:
\begin{small}
  \begin{equation}
    \begin{split}
    y_{t+1} & = 0.3 y_t + 0.05 y_t \left( \sum_{i=0}^{9} y_{t-i} \right) + 1.5 \zeta_{t-9} \zeta_{t} + 0.1,\\
    \zeta_t & \coloneqq \delta\left(u_t+\mu \right),
    \end{split}
  \end{equation}
\end{small}where $\delta$ and $\mu$ are parameters that control the range of $\zeta_t$.
\subsection{Observation functions}
\subsubsection{A/D converter}
In Fig.~\ref{fig:equippment_data}c, we numerically use the $n$-bit analog digital (A/D) converter for the observation function. 
First, we normalize the dynamical state $x$ to the $[0,1]$ range, as follows: 
\begin{eqnarray}
y &= \frac{x-x_{\rm min}}{x_{\rm max}-x_{\rm min}}, 
\end{eqnarray}
where $x_{\rm min}$ and $x_{\rm max}$ are the minimum and maximum values of state time series. 
The observation function is described by 
\begin{eqnarray}
  \phi &= \frac{1}{2^n} \lfloor 2^n y \rfloor, 
\end{eqnarray}
where the floor function $\lfloor 2^n y\rfloor$ takes the integer part of $2^ny\in[0, 2^n]$ and then $\phi$ normalizes $\lfloor 2^n y \rfloor$ to the range $[0,1]$ again. 
\subsubsection{Other functions}
In Fig.~\ref{fig:equippment_data}b, we additionally use the observation functions $\phi(x)\in\{x^2, \cos x, {\rm ReLU}(x)\}$. Before applying $\phi$, we normalized the dynamical state $x$ by its time-series standard deviation.
\subsection{IPC and TIPC}
\label{sec:IPC_and_TIPC}
The IPC extends MC to nonlinear processing.
In the framework of IPC, the processed input is represented as a polynomial, and we separately evaluate polynomial terms of different degrees and delays (e.g. $u_{t-1},u_{t-1}u_{t-2}$).
This framework makes it possible to identify the conditions under which all processed inputs can be extracted.
\par
The IPC represents the amount of processed inputs in the instantaneous state.
Using the target output $z_t$ of the delayed nonlinear input, we can calculate its estimate $\hat{z}_t$ using linear regression on the current state.
The IPC is defined as $D \coloneqq 1-{\rm NMSE}(\boldsymbol{z},\boldsymbol{\hat{z}})$.
The TIPC further extends the IPC to handle time-dependent inputs. 
Polynomial terms with temporal structure are included in the TIPC and evaluated on their own.
This allows for the characterization of temporal processing capabilities beyond instantaneous states.
We denote the total IPC or total TIPC by $C_{\rm tot}$.
\par
Let us consider a system state as $\boldsymbol{x}_t\in \mathbb{R}^{d}$ and driven by input $u_t$, and let $\hat{\boldsymbol{x}}_t\in \mathbb{R}^{r}(r\leq d)$ denote its orthonormalized form.
Here, $d$ is the dimension of the original system, and $r$ is the number of independent bases.
Then, $\hat{\boldsymbol{x}}_t$ can be expressed as a function of time $t$, initial value $\boldsymbol{x}_0$, and input history $u_{t-1},u_{t-2},\ldots$ as follows:
\begin{equation}
  \hat{\boldsymbol{x}}_t=F(t,u_{t-1},u_{t-2},\ldots, \boldsymbol{x}_0).
\end{equation}
 The IPC and TIPC are quantified by the norms of the coefficient vectors in the orthogonal polynomial expansion of $F$.
The coefficients associated with bases that depend only on the input history correspond to the IPC, whereas those associated with bases involving temporal variables correspond to the TIPC. 
The sum of the IPC and TIPC is bounded by $r$.
\par
For example, consider the following system:
\begin{equation}
  \begin{split}
    &x\coloneqq u_{t-1}\cos{t}+u^2_{t-2}\sin{2t}+u_{t-1}u_{t-2}+u_{t-1}.
  \end{split}
  \label{eq:TIPC_Example}
\end{equation}
Here, we assume that $t\in[0,2\pi]$ and $u_{t}\in[-1,1]$ for all $t$.
For this system, we expand the target functional in orthogonal polynomials with respect to time and input history and examine the corresponding coefficients.
We use trigonometric functions ($\cos t, \sin 2t$), and Legendre polynomials [$P_i(x)$ for $i = 0, 1, 2$] as orthogonal bases for the temporal and input history, respectively
(see the Supplementary Information for derivation).
\par
This system has $1$st and $2$nd order IPC and TIPC.
We denote the $d$th-order IPC and TIPC as ${\rm IPC}^{(d)}$ and ${\rm TIPC}^{(d)}$, respectively.
They are given by:
\begin{equation}
  \begin{aligned}
    {\rm TIPC}^{(1)} & =\frac{15}{64},& {\rm TIPC}^{(2)} &=\frac{1}{16},\\
    {\rm IPC}^{(1)} &=\frac{15}{32},& {\rm IPC}^{(2)} &=\frac{5}{32}.
    \label{eq:TIPC_result}
  \end{aligned}
\end{equation}
In Fig.~\ref{fig:equippment_data}b, each bar height represents the corresponding value, with the hatched regions denoting the TIPC and the unhatched regions denoting the IPC (see Fig.~S6 for a visual comparison).
\subsection{Spin-torque oscillator}
The experimental setup used in this study was based on a previously reported STO configuration~\cite{Tsunegi_2023} with several parameters adjusted for the present work. The STO was driven by a voltage-modulated binary input sequence. The applied bias voltage was switched between 150 mV and 300 mV, resulting in a controlled shift of the operating point of the oscillator in response to the input data. This voltage modulation induced distinct nonlinear magnetization dynamics, which served as the basis for PRC.
The electrical signal generated by the STO, was amplified by an external circuit and routed through a microfabricated conductor placed above the STO as shown in Fig.~S3.
This conductor produced a high-frequency magnetic field that acted back on the STO, effectively feeding a delayed and amplified version of its own output signal into the device. In this manner, a self-feedback loop was established. The total delay time in the loop was adjusted to 28.6 ns, and the overall loop gain was set to 38 dB when the attenuation value was 0 dB. These parameters were chosen to ensure rich temporal dynamics while maintaining stable oscillation.
The STO voltage output was digitized at a high sampling rate, and the resulting waveform was subsequently processed to construct temporal virtual nodes and ensemble-averaged states, as described in the main text. Except for the modified delay, loop gain, and input-voltage conditions noted above, the measurement procedures followed exactly those of the earlier report.
\subsection{Cyclic redundancy check task}
The CRC is a standard technique to detect an error bit in an input sequence. 
In this procedure, the sender calculates the remainder by dividing the input sequence by a divisor and attaches the remainder to the sequence. 
After receiving the input sequence with the remainder, the receiver recomputes the remainder using the same divisor and checks for consistency.
\par
In this paper, we frame the CRC task as emulating the remainder output.
We use the CRC-4-ITU representation, which uses the divisor of $1101$. 
For a 5-bit input sequence, the target outputs are:
\begin{align}
  y_t = 
  \begin{bmatrix}
    u_t \oplus u_{t-2}\\
    u_t \oplus u_{t-1} \oplus u_{t-3}\\
    u_{t-1} \oplus u_{t-4}
  \end{bmatrix}.
\end{align}
Here, the target $y_t$ is the coefficient vector of the remainder $r(x):=U(x)\bmod g(x)$ (see the Supplementary Information for further derivation), where $U(x$) and $g(x)$ are defined as follows:
\begin{equation*}
  \begin{split}
    &U(x)\coloneqq u_{t}x^4+u_{t-1}x^3+u_{t-2}x^2+u_{t-3}x+u_{t-4},\\
    &g(x)\coloneqq x^3+x+1.
  \end{split}
\end{equation*}
\subsection{Hamming code tasks}
The Hamming code is an important technique for correcting bit-flip errors in a received sequence. 
In this procedure, the sequence is transmitted from a sender to a receiver. 
The sender and receiver perform the encoding and decoding tasks, respectively. 
In the encoding task, using a $4$-bit input sequence $u=[u_t,u_{t-1},u_{t-2},u_{t-3}]^\top$, an additional $3$-bit sequence for correction is computed. 
The $7$-bit sequence, which concatenates them, is given as follows: 
\begin{align*}
  y = G^\top u, 
\end{align*}
where the generator matrix $G$ is described by 
\begin{align*}
  G = 
  \begin{bmatrix}
    1 & 0 & 0 & 0 & 1 & 1 & 0 \\
    0 & 1 & 0 & 0 & 1 & 0 & 1 \\
    0 & 0 & 1 & 0 & 0 & 1 & 1 \\
    0 & 0 & 0 & 1 & 1 & 1 & 1 
  \end{bmatrix}. 
\end{align*}
In the decoding task, the receiver is required to correct the received $7$-bit sequence $v=[v_{t},v_{t-1},v_{t-2},v_{t-3},$
$v_{t-4},v_{t-5},v_{t-6}]^\top$ if the sequence contains a bit-flip error. 
The first four bits of the corrected sequence are described by 
\begin{align*}
  y = \begin{bmatrix}
    (p_1\cdot p_2\cdot \overline{p_3})\oplus \overline{v_{t}} \\
    (\overline{p_1}\cdot p_2\cdot p_3)\oplus \overline{v_{t-1}} \\
    (p_1\cdot \overline{p_2}\cdot p_3)\oplus \overline{v_{t-2}} \\
    (p_1\cdot p_2\cdot p_3)\oplus \overline{v_{t-3}} 
  \end{bmatrix}, 
\end{align*}
where the parity bits $[p_1,p_2,p_3]$ are described by 
\begin{align*}
  \begin{bmatrix}
    p_1 \\
    p_2 \\
    p_3
  \end{bmatrix} 
  = Hv = 
  \begin{bmatrix}
    v_{t}\oplus v_{t-1}\oplus v_{t-3}\oplus v_{t-4} \\
    v_{t}\oplus v_{t-2}\oplus v_{t-3}\oplus v_{t-5} \\
    v_{t-1}\oplus v_{t-2}\oplus v_{t-3}\oplus v_{t-6}
  \end{bmatrix},
\end{align*}
with the parity-check matrix 
\begin{align*}
  H = 
  \begin{bmatrix}
    1 & 1 & 0 & 1 & 1 & 0 & 0 \\
    1 & 0 & 1 & 1 & 0 & 1 & 0 \\
    0 & 1 & 1 & 1 & 0 & 0 & 1
  \end{bmatrix}.
\end{align*}
\subsection{Echo state property}
To evaluate and ensure the success of RC, we introduce the echo state property (ESP), a standard indicator of its computational performance. Since RC predicts outputs through a linear prediction by a linear readout of system states, these states must be reproducible functions of the input history.
The ESP has two definitions: First, after sufficiently long input injection, the state should be expressible as a function of input history $U=\{u_{t-\tau}\}_{\tau=0}^{\infty}$ as follows:
\begin{equation*}
  \boldsymbol{x}=F(u_t,u_{t-\tau_1},u_{t-\tau_2},\ldots).
  \label{eq:echo_func}
\end{equation*}
The function $F$ is called an echo function, which guarantees a reproducible response to an identical input sequence.
The state of an input-driven dynamical system depends on the input history, time, and the initial value.
The first definition of the ESP requires that the dependence on both time and the initial value vanish. The second definition uses common-signal-induced synchronization, in which two systems with different initial values receive the same input and their states eventually converge; this convergence indicates that, after long input injection, the states become independent of the initial values that when injecting input for a long time, the states are independent of initial values.
However, these definitions are not fully equivalent in terms of variable dependence.
Synchronization removes dependence on the initial value and but does not guarantee that the state depends solely on the input history [e.g., $x(t,U,x_0)\coloneqq u_{t}\cos t+e^{-t}x_0$].
\section{Acknowledgements}
This work was supported by JST BOOST (Grant No. JPMJBS2405) and JST CREST (Grant No. JPMJCR2014).
We thank Dr. Hitoshi Kubota (AIST) for assisting with device 
microfabrication and for providing the SEM images used in this study.

\clearpage
\onecolumngrid
\section*{Supplementary Information:\\
Ensemble Reservoir Computing for Physical Systems}
\twocolumngrid

\section{Time-invariant QUANTITIES FROM ensemble averaging}
Before stating the theorem, we introduce the notation for the weights $\{\omega_l^{(L)}\}_{l=1}^L$ and the definition of an analytic function, as follows:
\begin{align*}
    \mathcal{W}_L & \coloneqq \{\omega_l^{(L)}\}_{l=1}^L, \\
    \mathcal{W}_L^{+} & \coloneqq\{\omega_l^{(L)} \in \mathcal{W}_L \mid \omega_l^{(L)}>0,\ l=1,2, \ldots,L\},\\ 
    \mathcal{W}_L^{-} & \coloneqq\{\omega_l^{(L)} \in \mathcal{W}_L \mid \omega_l^{(L)}<0,\ l=1,2, \ldots,L\}.
\end{align*}
A function $\phi\colon (a, b)\to \R~(a, b\in\R, a<b)$ is said to be analytic on $(a,b)$ if, for any point $c \in (a,b)$, there exists a radius $R > 0$ such that
\begin{align}
    \phi(x) = \sum_{n=0}^{\infty} \frac{\phi^{(n)}(c)}{n!}\,(x - c)^n,\quad
    |x - c| < R,
    \label{eq:phi_expansion}
\end{align}
where the series converges and its sum equals $\phi(x)$. Here, $\phi^{(n)}(c)$ is the $n$-th derivative of $\phi$ at point $c$.
\begin{supptheorem}[Ensemble average transformation]\label{thm:amsemble}
Suppose that the weights $\mathcal{W}_L = \{\omega_l^{(L)}\}_{l=1}^L$ satisfy
\[
    \max_{\omega_l^{(L)}\in \mathcal{W}_L} |\omega_l^{(L)}| \rightarrow 0 \quad (L\to\infty).
\]
We consider the following two cases for the error factor $E$:
\begin{itemize}
\item[(i)]
$E$ depends only on noise  $(\mbox{i.e., } E=E^{(l)}=\{v^{(l)}\}):$
\par
\quad
The system state is described by $x=x(U,v)$.
Here, $v$
is a random variable distributed on $(a,b)$ and i.i.d.\ across spatially multiplexed systems.
For the weights $\mathcal{W}_L = \{\omega_l^{(L)}\}_{l=1}^L$, there exist non-negative constants $C_1$ and $C_2$ with $\max\{C_1, C_2\} > 0$ and independent of $L$, such that
\[
    \qquad \sum_{\omega_l^{(L)}\in \mathcal{W}_L^{+}} \omega_l^{(L)} = C_1, \quad
    \sum_{\omega_l^{(L)}\in \mathcal{W}_L^{-}} |\omega_l^{(L)}| = C_2.
\]
\item[(ii)]
$E$ depends only on a temporal fluctuation ${\rm (} \mbox{i.e., } E = E^{(l)} = \{t,\boldsymbol{x}_0^{(l)}\}{\rm ):}$
\par
\quad
Let $x(t) = x(t, U,\boldsymbol{x}_0)$, $t \geq 0$, be a function of $U$ and $\boldsymbol{x}_0$ satisfying $x(t)\in (a,b)$ for $a, b\in\R~(a<b)$.
Let a sequence of initial values $\{\boldsymbol{x}^{(l)}_0\}_{l=1}^\infty$ be given.
Suppose that, for each $l\in\N$, the relation:
\begin{align}
\qquad & x(t, U,\boldsymbol{x}^{(l)}_0) \notag \\
 & \simeq \sum_{j=1}^{N} \Bigl\{
a_j\cos \bigl(f_j+\theta^{(l)}_j \bigr)
+ b_j\sin \bigl(f_j+\theta^{(l)}_j \bigr)\Bigr\}+ C
\label{approx:x_t_U_x0}
\end{align}
holds with smooth functions $a_j = a_j(U)$, $b_j = b_j(U)$, $f_j = f_j(t,U)$, and $C = C(U)$ and with phases $\theta_j^{(l)}$, $j=1,\ldots,N$.
Suppose that for each $l$, the dependence on the initial value is indirectly expressed by i.i.d.~random phases $\{\theta_j^{(l)}\}_{j=1}^N$ that follow a uniform distribution in the range of $(-\pi,\pi)$ and that the function~$\phi\colon (a,b) \rightarrow \R$ is analytic.
Suppose further that the weights $\mathcal{W}_L = \{\omega^{(L)}_l\}_{l=1}^{L}$ satisfy the following:
\begin{itemize}
    \item[{\rm (}ii-1{\rm )}]
    All weights in $\mathcal{W}_L$ have the same sign (i.e., $\mathcal{W}_L = \mathcal{W}_L^{+}$ or $\mathcal{W}_L = \mathcal{W}_L^{-}$).
    \item[{\rm (}ii-2{\rm )}]
    There exists a positive constant $C_3$, independent of $L$, such that
    \begin{align*}
        \sum_{l=1}^{L} |\omega_l^{(L)}| = C_3.
    \end{align*}
\end{itemize}
\end{itemize}
Suppose that (i) or (ii) holds and that the weighted average:
\[
\sum_{l=1}^{L}\omega_l^{(L)}\phi(x(U, E^{(l)}))
\]
converges as $L \to \infty$; that is, $\mathbb{E}[\phi(x)]$ in Eq.~{\rm (1)} is well-defined.
Then, $\mathbb{E}[\phi(x)]$ is time-invariant.
\end{supptheorem}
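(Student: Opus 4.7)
In both cases, my strategy is to show that the weighted ensemble sum converges, as $L\to\infty$, to an expectation over the distribution of the error factor $E$, and then that this expectation is a function of the input history $U$ alone. The convergence step is a weighted law-of-large-numbers argument enabled by the hypothesis $\max_l |\omega_l^{(L)}|\to 0$ together with boundedness of the state; the $t$-independence of the limit then comes from the symmetry of the distribution of $E$.

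For case (i), I would split the sum $\sum_{l=1}^{L}\omega_l^{(L)}\phi(x(U,v^{(l)}))$ into its positive-weight and negative-weight contributions indexed by $\mathcal{W}_L^{+}$ and $\mathcal{W}_L^{-}$. Each is a weighted average of i.i.d.\ samples $\phi(x(U,v^{(l)}))$ with deterministic weights of total mass $C_1$ and $C_2$, respectively, whose maxima vanish. A standard weighted strong law of large numbers then yields
\begin{align*}
\mathbb{E}[\phi(x)] = (C_1 - C_2)\,\mathbb{E}_v\bigl[\phi(x(U,v))\bigr],
\end{align*}
which depends only on $U$ and the fixed distribution of $v$, and is therefore time-invariant.

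For case (ii), conditions (ii-1) and (ii-2) guarantee that all weights share a common sign and have total absolute mass $C_3$, so the same weighted-LLN step, now applied to the i.i.d.\ phase vectors $\theta^{(l)} = (\theta_1^{(l)},\ldots,\theta_N^{(l)})$, yields
\begin{align*}
\mathbb{E}[\phi(x)] = \pm\, C_3\,\mathbb{E}_\theta\bigl[\phi(x(t,U,\theta))\bigr].
\end{align*}
I would then rewrite each harmonic pair in Eq.~(\ref{approx:x_t_U_x0}) as $a_j\cos(f_j+\theta_j)+b_j\sin(f_j+\theta_j) = r_j(U)\cos\bigl(f_j+\theta_j-\psi_j(U)\bigr)$ with $r_j=\sqrt{a_j^2+b_j^2}$, and invoke the translation invariance of the uniform distribution on $(-\pi,\pi)$ by substituting $\theta_j'\coloneqq f_j+\theta_j-\psi_j \pmod{2\pi}$. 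Each $\theta_j'$ is again uniform and independent across $j$, so the conditional distribution of $x$ given $U$ coincides with that of $\sum_{j=1}^{N} r_j(U)\cos\theta_j' + C(U)$, in which $t$ no longer appears. An equivalent route, matching the sketch in the main text, expands $\phi$ as a power series around a point of $(a,b)$, expands each power via the multinomial and product-to-sum identities into a finite sum of cosines of the form $\cos\bigl(\sum_j k_j(f_j+\theta_j)+\mathrm{const}(U)\bigr)$, and uses $\int_{-\pi}^{\pi}\cos(k\theta+\eta)\,d\theta = 0$ for $k\ne 0$ to kill every mode except $(k_1,\ldots,k_N)=0$, leaving only $U$-dependent terms.

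The main technical obstacle is the rigorous justification of these interchanges. In case (i), boundedness of $\phi$ on the bounded state range ensures the weighted SLLN applies and that $\mathbb{E}_v[\phi(x(U,v))]$ is finite. In case (ii), the delicate step is the termwise evaluation of $\mathbb{E}_\theta$ applied to the power series of $\phi$: analyticity on $(a,b)$ together with the fact that the range of $x$ is contained in a compact subinterval of $(a,b)$ yields uniform convergence of the expansion on that range, so dominated convergence legitimizes exchanging the expectation over $\theta$ with the infinite sum. The independence of the phases across both $l$ and $j$ then reduces the weighted-LLN step to the one-dimensional case already handled.
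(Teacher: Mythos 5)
Your proposal is correct, and it subsumes the paper's argument while adding a cleaner alternative. For case (i) you and the paper use the same decomposition into positive- and negative-weight sums; your identification of the limit as $(C_1-C_2)\,\mathbb{E}_v[\phi(x(U,v))]$ is the natural one (the paper instead writes the limit as integrals over a partition $S_1\cup S_2=(a,b)$ of measures $C_1,C_2$, which is less transparent but reaches the same conclusion that $v$ is integrated out). For case (ii), your \emph{second} route is exactly the paper's proof: expand the analytic $\phi$ in a power series, reduce to $\mathbb{E}[x^n]$, apply the multinomial theorem, use independence of the $\theta_j$ and $2\pi$-periodicity to reduce to $\int_{-\pi}^{\pi}\cos^{l_j}\theta_j\,d\theta_j$, and justify the interchange of expectation and series by dominated convergence. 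Your \emph{first} route --- writing each harmonic pair as $r_j(U)\cos(f_j+\theta_j-\psi_j)$ and using translation invariance of the uniform phase to conclude that the conditional law of $x$ given $U$ equals that of $\sum_j r_j(U)\cos\theta_j'+C(U)$ --- is genuinely different and arguably stronger: it shows the whole conditional distribution is time-invariant, so $\mathbb{E}[\phi(x)]$ is time-invariant for any integrable $\phi$, without the analyticity hypothesis. What the paper's computational route buys in exchange is the explicit closed form of $\mathbb{E}[x^n]$ in terms of $r_1,\ldots,r_N$ and $C$ (Eq.~(S.3e)), which is then reused in Theorem~S.5 and the limitations analysis; your distributional argument establishes invariance but not those formulas. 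Your remarks on the weighted-LLN step (using $\max_l|\omega_l^{(L)}|\to 0$ plus bounded total mass so that $\sum_l(\omega_l^{(L)})^2\to 0$) are sound and in fact more careful than the paper, which passes to the limiting integrals without comment.
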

\begin{proof}
To begin with, we prove the theorem for case~(i).
Decomposing the finite weighted average into two terms and letting $L \to \infty$, we obtain
\begin{align*}
    &\sum_{l=1}^L \omega_l^{(L)} \phi \left( x(U, v^{(l)}) \right) \\
    &= \sum_{\omega_l^{(L)}\in\mathcal{W}_L^{+}}\omega_l^{(L)} \phi \bigl( x(U, v^{(l)}) \bigr) +
    \sum_{\omega_l^{(L)}\in\mathcal{W}_L^{-}}\omega_l^{(L)} \phi \bigl( x(U, v^{(l)}) \bigr) \\
    & \to \int_{S_1}\phi( x(U, v))P(v)dv + \int_{S_2}\phi( x(U, v))P(v)dv,
\end{align*}
where $S_1\cup S_2=(a,b)$, $S_1\cap S_2=\varnothing$, $P:(a,b)\rightarrow\R$ is a distribution of $v$, and the Lebesgue measures of $S_1$ and $S_2$ are $C_1$ and $C_2$, respectively.
Since the two integrations over $S_1$ and $S_2$ above are independent of noise~$v$, $\mathbb{E}[\phi(x)]$ is time-invariant.
\par
Next, we prove the theorem for case (ii).
As $\phi$ is analytic, we have the expression of $\phi$ in Eq.~\eqref{eq:phi_expansion}.
Therefore, it is sufficient to consider the $n$-th power case. 
By substituting the relation in Eq.~\eqref{approx:x_t_U_x0} into $x$, the $n$-th power of $x$ is transformed into:
\begin{equation}
    x^n\simeq\sum_{k=0}^{n}\binom{n}{k}\left( \sum_{j=1}^{N}r_j\cos(\theta^{(l)}_j+\eta_j)\right)^{n-k}C^{k},
    \label{eq:composition_form}
\end{equation}
where $r_j \coloneqq \sqrt{a_j^2+b_j^2}$ is time-invariant and $\eta_j\coloneqq \arctan(\frac{a_j\cos f_j+b_j\sin f_j}{-a_j\sin f_j+b_j\cos f_j})$ is time-variant. When we choose the weights $\omega_l^{(L)} = 1/L$~$(l=1,\ldots,L)$, which satisfies the assumption in (ii-1) and (ii-2). Then, the time-invariance of the ensemble average $\mathbb{E}[x^n]$ is shown as follows:
\begin{align}
    &\mathbb{E}[x^n]
    \simeq \mathbb{E}\left[\sum_{k=0}^{n}\binom{n}{k}C^{k}\left( \sum_{j=1}^{N}r_j\cos(\theta^{(l)}_j+\eta_j)\right)^{n-k}\right]\tag{S.~3a}\label{eq:one}\\
    &=\sum_{k=0}^{n}\frac{n!}{k!}C^{k}\mathbb{E}\left[ \sum_{\substack{l_1,\ldots,l_N\geq 0\\
    l_1+\ldots+l_N=n-k}}\prod_{j=1}^{N}\frac{r^{l_j}_j}{l_j!}\cos(\theta^{(l)}_j+\eta_j)^{l_j}\right]\tag{S.~3b}\label{eq:two}\\
    &=\sum_{k=0}^{n}\frac{n!}{k!}C^{k}\sum_{\substack{l_1,\ldots,l_N\geq 0\\
    l_1+\ldots+l_N=n-k}}\prod_{j=1}^{N}\frac{r_j^{l_j}}{l_j!}\int_{-\pi}^{\pi}\cos(\theta_j+\eta_j)^{l_j}d\theta_j\tag{S.~3c}\label{eq:three}\\
    &=\sum_{k=0}^{n}\frac{n!}{k!}C^{k} \sum_{\substack{l_1,\ldots,l_N\geq 0\\
    l_1+\ldots+l_N=n-k}}\prod_{j=1}^{N}\frac{r_j^{l_j}}{l_j!}\int_{-\pi}^{\pi}\cos^{l_j}\theta_jd\theta_j\tag{S.~3d}\label{eq:four}\\
    &=\sum_{k=0}^{n}\frac{n!}{k!}C^{k} \sum_{\substack{l_1,\ldots,l_N\geq 0\\
    l_1+\ldots+l_N=n-k\\ l_1,\ldots, l_N \colon \mbox{even}
    }}\prod_{j=1}^{N}\frac{r_j^{l_j}}{l_j!}\frac{2\pi}{2^{l_j}}\binom{l_j}{l_j/2}\tag{S.~3e}\label{eq:ri_Prod}
\end{align}
where the right-hand side of Eq.~\eqref{eq:one} is obtained by substituting Eq.~\eqref{eq:composition_form} into $x^n$, Eq.~\eqref{eq:two} follows from the multinomial theorem: for any $m\in \mathbb{N}\cup\{0\}$,
\begin{equation}
    \left(\sum_{j=1}^{N}\alpha_j\right)^{m}=\sum_{\substack{l_1,\ldots,l_N\geq 0\\
    l_1+\ldots+l_N=n-k}}\frac{m!}{l_1!\ldots l_N!}\prod_{j=1}^{N}\alpha_j^{l_j}.
\end{equation}
Moreover, we use the identity
\[
\binom{n}{k}\frac{(n-k)!}{l_1!\cdots l_N!}
=\frac{n!}{k!}\frac{1}{l_1!\cdots l_N!}=\frac{n!}{k!}\prod_{j=1}^{N}\frac{1}{l_j!},
\]
with $\alpha_j=\cos(\theta_j^{(l)}+\eta_j)$ and $m=n-k$, Eq.~\eqref{eq:three} follows from the independence of $\theta_i$ and $\theta_j$ for $i\neq j$, in Eq.~\eqref{eq:four} is a consequence from the 2$\pi$-periodicity of the cosine function, and Eq.~\eqref{eq:ri_Prod} follows from the identity:
\begin{equation}
    \int_{-\pi}^{\pi}\cos^{l_j} \theta_j d\theta_j=
    \displaystyle
    \left\{
    \begin{array}{cc}
    \displaystyle 0 & (l_j \mbox{ is odd})\\
    \displaystyle \frac{2\pi}{2^{l_j}}\binom{l_j}{l_j/2} & (l_j\mbox{ is even})
    \end{array}
    \right..
    \label{eq:cos_prod_integral}
\end{equation}
For any weights with (ii-1) and (ii-2), the weighted sum converges to the same integral by uniqueness.
Hence, the ensemble average of $x^{n}$ is time-invariant.
\par
Second, we consider the ensemble average of $\phi(x)$. Letting $\phi_{N}$ be a partial sum defined by $\phi_N(x)\coloneqq \sum_{n=0}^{N}\frac{\phi^{(n)}(c)}{n!}(x-c)^n$, we have
\begin{equation}
    |\phi_N(x)|\leq\sum_{n=0}^{\infty}\left| \frac{\phi^{(n)}(c)}{n!}\right | |(x-c)^n|,
    \label{eq:phi_N}
\end{equation}
\begin{figure*}[t]
    \centering
    \includegraphics[width=\textwidth]{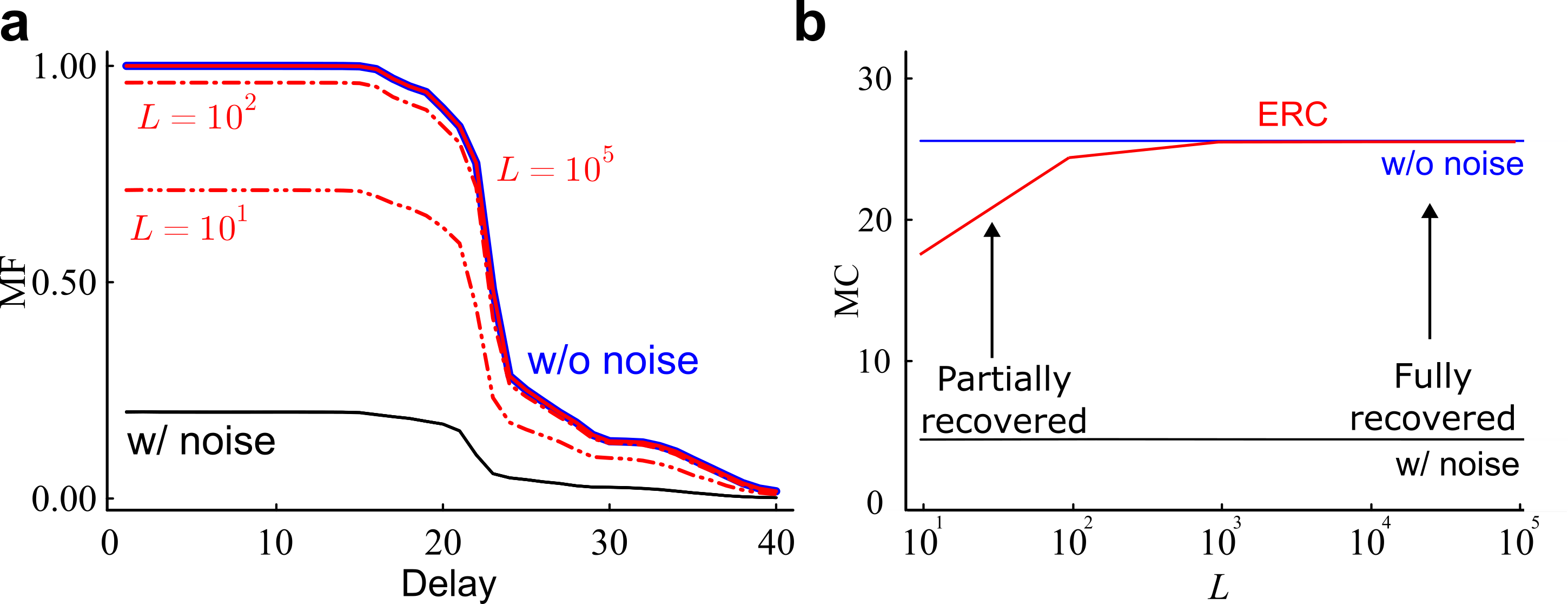}
    \caption{
    \textbf{Finite-size comparison of the total retrievable MC.}
    \textbf{a}) Some memory functions are illustrated: a standard ESN (blue) and an ESN with noise (black).
    The red line shows ERC results with $L=10$, $L=10^2$ (dash), and $L=10^5$ (solid).
    \textbf{b}) The MC as a function of the number of spatially multiplexed systems.
    The blue and black solid lines show the MC without noise and with noise, respectively.
    The red line shows the MC for spatially multiplexed systems with $L\in \{10,10^2,10^3,10^4,10^5\}$.
    }
    \label{eq:finite_size_ESN}
\end{figure*}where the right-hand side of Eq.~\eqref{eq:phi_N} converges as $\phi$ is analytic. By the Lebesgue dominated convergence theorem, we obtain:
\begin{equation}
    \mathbb{E}[\phi(x)]=\sum_{n=0}^{\infty}\frac{\phi^{(n)}(c)}{n!}\mathbb{E}[(x-c)^n].
\end{equation}
From the binomial expansion, 
$\mathbb{E}[(x-c)^{n}]$ is time-invariant for any $n \in \mathbb{N}$, which completes the proof.
\end{proof}
\begin{suppcor}\label{rem:sin_phi}
An important consequence of Theorem~\ref{thm:amsemble} with the assumption (ii) is stated below.
Let $\varphi$ be any function in $L^2(a,b)$.
Then $\varphi$ can be expanded as follows:
\begin{equation}
        \frac{a_0}{2}+\sum_{n=1}^{\infty} \left\{ a_n\cos\bigl( 2n\pi \, y(x)\bigr) + b_n\sin\bigl( 2n\pi \, y(x)\bigr) \right\},
    \label{eq:fourier_expansion}
\end{equation}
where $a_n$ and $b_n$ are given by, for $n\in \N\cup\{0\}$,
    \begin{align*}
        a_n & \coloneqq  \frac{2}{b - a} \int_a^b \varphi(x) \cos\left( 2n\pi \frac{x-a}{b-a} \right) dx,\\
        b_n & \coloneqq  \frac{2}{b - a} \int_a^b \varphi(x) \sin\left( 2n\pi \frac{x-a}{b-a}\right) dx,
    \end{align*}
and $y(x)\coloneqq (x-a)/(b-a)$.
Suppose that $\{a_n\}$ and $\{b_n\}$ belong to $\ell^{1}$.
Then, $\mathbb{E}[\varphi(x)]$ is also time-invariant.
\end{suppcor}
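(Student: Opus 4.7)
My plan is to reduce the corollary to Theorem~\ref{thm:amsemble}(ii) by decomposing $\varphi$ into its Fourier basis functions and then interchanging the ensemble-average limit with the Fourier summation. First, I would observe that for each $n \in \N$ the functions $x \mapsto \cos(2n\pi y(x))$ and $x \mapsto \sin(2n\pi y(x))$ are compositions of the entire functions $\cos, \sin$ with the affine map $y(x) = (x-a)/(b-a)$, hence are real analytic on $(a,b)$. Applying Theorem~\ref{thm:amsemble}(ii) with $\phi$ chosen as each such basis function shows that every one of the ensemble averages $\mathbb{E}[\cos(2n\pi y(x))]$ and $\mathbb{E}[\sin(2n\pi y(x))]$ is well-defined and depends only on the input history $U$, hence is time-invariant.

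Next, I would upgrade pointwise to uniform convergence of the Fourier partial sums $\varphi_N(x) \coloneqq a_0/2 + \sum_{n=1}^N\{a_n\cos(2n\pi y(x)) + b_n\sin(2n\pi y(x))\}$. Since $|\cos|, |\sin|\le 1$ and $\sum_n(|a_n|+|b_n|)<\infty$ by the $\ell^1$ hypothesis, the Weierstrass $M$-test gives $\|\varphi-\varphi_N\|_\infty \to 0$. Combining this with the uniform weight-mass bound $\sum_{l=1}^L |\omega_l^{(L)}| = C_3$ supplied by condition (ii-2), the tail estimate
\begin{equation*}
  \Bigl|\sum_{l=1}^L \omega_l^{(L)}\bigl(\varphi(x(U,E^{(l)}))-\varphi_N(x(U,E^{(l)}))\bigr)\Bigr| \le C_3\,\|\varphi-\varphi_N\|_\infty
\end{equation*}
vanishes as $N\to\infty$ uniformly in $L$. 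A standard $\varepsilon/3$ argument then exchanges the two limits and yields
\begin{equation*}
  \mathbb{E}[\varphi(x)] = \frac{a_0}{2} + \sum_{n=1}^\infty\bigl\{ a_n\,\mathbb{E}[\cos(2n\pi y(x))] + b_n\,\mathbb{E}[\sin(2n\pi y(x))]\bigr\},
\end{equation*}
and each summand on the right is time-invariant by the first step, so the sum is too.

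The hard part will be making the double-limit interchange fully rigorous. The $\ell^1$ assumption is precisely the hypothesis that bridges the gap: it promotes the Fourier representation from merely $L^2$ (which would only give $\ell^2$ coefficients and mean-square convergence) to uniform convergence, and uniform convergence is what pairs cleanly with the total-weight bound $C_3$ from (ii-2) to make the swap go through. Had one assumed only $\varphi \in L^2(a,b)$, additional structural hypotheses on the joint distribution of the ensemble $\{x(U,E^{(l)})\}$ (for instance, equidistribution of the induced samples in $(a,b)$) would be required to justify the exchange, so the $\ell^1$ condition is exactly the right trade-off to keep the proof within the framework of Theorem~\ref{thm:amsemble}.
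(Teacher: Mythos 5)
Your proposal is correct and follows essentially the same route as the paper: apply Theorem~\ref{thm:amsemble}(ii) term-by-term to the analytic basis functions $\cos(2n\pi y(x))$, $\sin(2n\pi y(x))$, and use the $\ell^{1}$ bound $\sum_n(|a_n|+|b_n|)<\infty$ to justify interchanging the ensemble average with the Fourier sum. The only cosmetic difference is the interchange mechanism --- the paper dominates the partial sums by the constant $\tfrac{a_0}{2}+\sum_n(|a_n|+|b_n|)$ and invokes the Lebesgue dominated convergence theorem, whereas you use the Weierstrass $M$-test together with the weight-mass bound $C_3$ in an $\varepsilon/3$ argument --- but both rest on exactly the same uniform bound supplied by the $\ell^{1}$ hypothesis.
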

\begin{proof}
It is sufficient to show that the series converges pointwise almost everywhere and is dominated by a function $g\in L^1(a,b)$.
Let $\varphi_N$ denote the partial sum truncated after $N$ terms in Eq.~\eqref{eq:fourier_expansion}.
Then, the absolute value of $\varphi_N$ satisfies the following inequality:
\begin{equation}
    |\varphi_N(x)|\leq \frac{a_0}{2}+\sum_{n=0}^{\infty}\bigl( |a_n|+|b_n| \bigr).
\end{equation}
Since from $a_n, b_n\in \ell^1$, the right-hand side is bounded, and we can define $g$ accordingly.
\end{proof}
\subsection{Performance of ERC for finite-size systems}
The ERC method relies on an ensemble average transformation that extracts a time-invariant value in the limit $L\rightarrow\infty$ given in Eq.~{\rm (1)}.
In practice, this limit is approximated using a finite number $L$ of spatially multiplexed systems.
\par
We evaluate the finite-size effect numerically using the ESN defined in Eq.~{\rm (2)}.
All settings, except $L$, are described in Methods.
Since noise and input are reflected in the same vector $W_{\rm in}$, the input information extractable by ERC is bounded above by that of a standard ESN.
Figures~\ref{eq:finite_size_ESN}a and b compare MFs and MC, respectively.
The MC lost by noise injection is fully recovered for $L>10^3$.
However, it is only partially recovered for $L=10$ and $L=100$, which represent realistic values in practical settings.
\vspace{1.0\baselineskip}
\section{Time-invariant definition}
\begin{suppdfn}\label{def:invariant}
Consider $x = x(U,t,x_0,v_t)$.
For any fixed $h>0$, we define $x_1$ which is shifted the time by keeping the input history fixed, as follows: 
\begin{equation}
    x_1\coloneqq x(U, t+h, \boldsymbol{x}_0, v_{t+h}).
\end{equation}
If $x=x_1$ for any $t>0$, then $x$ is time-invariant.
\end{suppdfn}
\section{Bifurcation of ESNs}
The dynamical behavior of an ESN is governed by the spectral radius $\rho$ of the connection matrix $W$. 
The ESN exhibits fixed-point behavior when $\rho < 1$, while periodic or chaotic behavior appears when $\rho > 1$~\cite{ESN_jeager}.
Similar behavior was observed in our system (Fig.~\ref{fig:ESN_bifurcation}), with chaotic dynamics emerging for $\rho > 2$. Chaos was quantified by the Lyapunov exponent defined below.
\begin{figure*}[t]
    \centering
    \includegraphics[width=\textwidth]{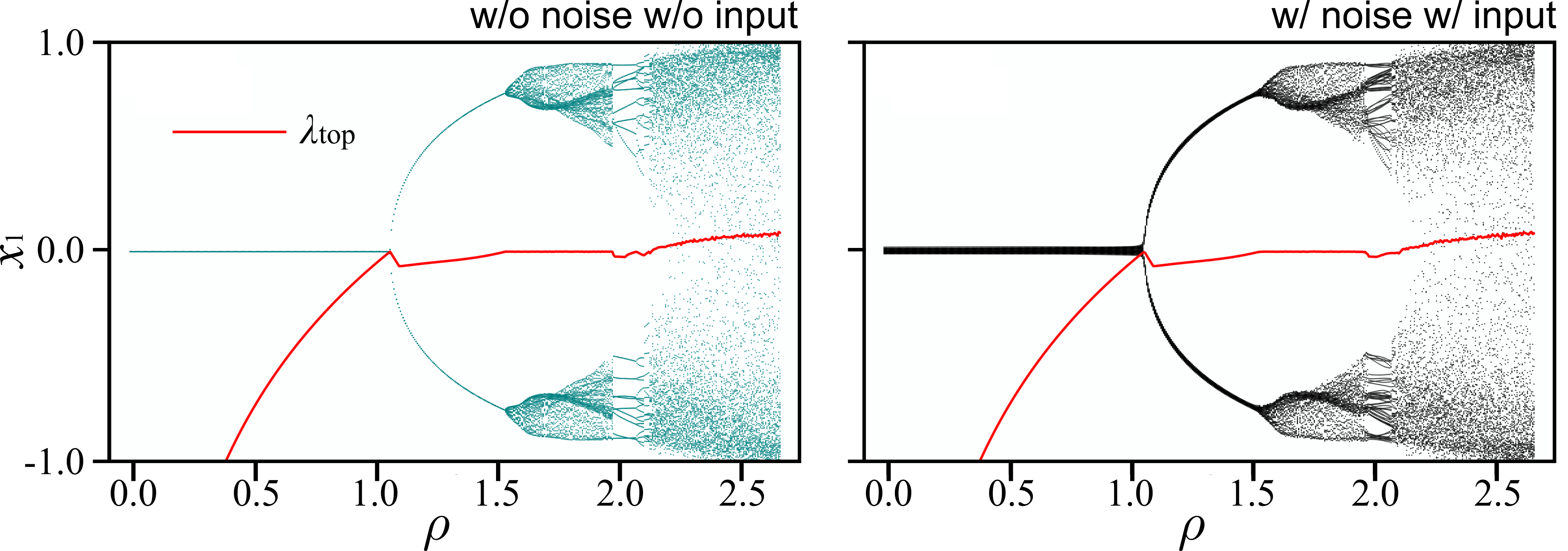}
    \caption{\textbf{Bifurcation diagram of an ESN.} Bifurcation diagrams of the first component of the ESN state and the maximum Lyapunov exponent $\lambda_{\mathrm{top}}$ (red) for two cases: with noise/input (left) and without noise/input (right).
    $\lambda_{\rm top}$ serves as the maximum Lyapunov exponent (no input) or the maximum conditional Lyapunov exponent (with input/noise).
    The horizontal axes represent the spectral radius $\rho$ of the ESN.}
    \label{fig:ESN_bifurcation}
\end{figure*}
\subsection{Maximum Lyapunov exponent}
The maximum Lyapunov exponent $\lambda_{\rm top}$ is defined as follows:
\begin{equation}
    \lambda_{\rm top}\coloneqq \lim_{t\rightarrow \infty}\frac{1}{t}\log\frac{\|\delta(t)\|_\ell^2}{\|\delta(0)\|_\ell^2},
\end{equation}where $\delta(t)$ denotes the distance between two trajectories. 
These trajectories have different initial values in the same $d$-dimensional system.
$\lambda_{\rm top}$ is a criterion for determining whether the system is chaotic.
More precisely, we call $\lambda_{\rm top}$ the maximum conditional Lyapunov exponent when the system is driven by an external input. We calculated $\lambda_{\rm top}$ using an algorithm described by J. C. Sprott~\cite{Lyapunov_exponent_numerically}.
Specifically, we track the divergence of nearby trajectories by repeatedly renormalizing their separation vector and then averaged the logarithmic growth rate over long trajectories after eliminating transients.
\section{STO setup details}
\begin{figure}[b]
    \centering
    \includegraphics[width=0.48\textwidth]{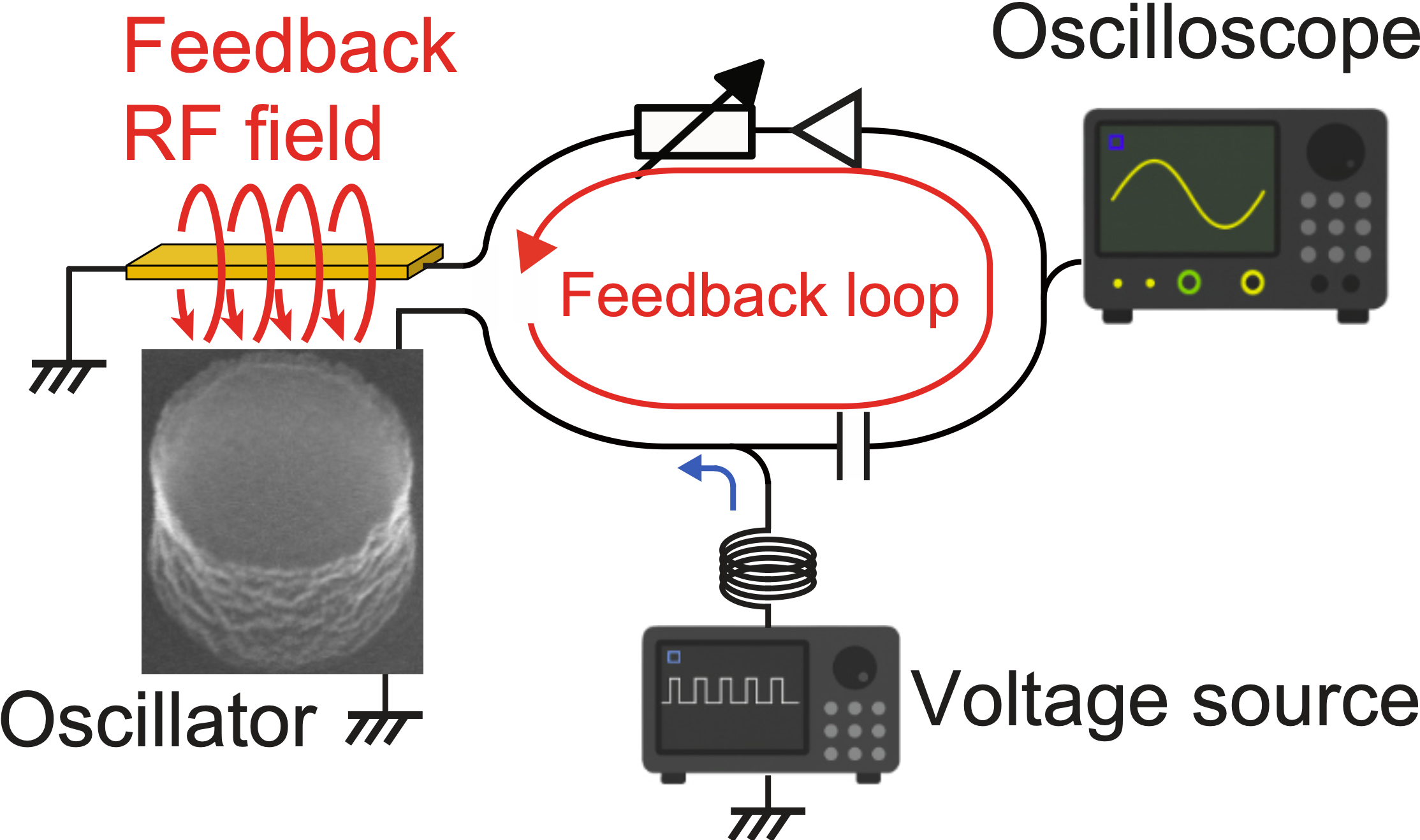}
    \caption{\textbf{Schematic of the experimental setup used for STO reservoir computing.} The STO image shown corresponds to the resist pillar formed during the lithography step and is provided for illustrative purposes only; it does not represent the final metallic multilayer structure of the device.}
    \label{fig:STO_expeimental_setting}
\end{figure}
The STO used in this study is a device that converts the nonlinear magnetization dynamics of a nanoscale ferromagnet into an electrical signal. We control the nonlinear dynamics of the STO through a self-feedback loop and perform PRC by exploiting these dynamics. The device structure and feedback circuit are nearly identical to those reported previously~\cite{Tsunegi_2023} (see Fig.~\ref{fig:STO_expeimental_setting}).
Moreover, we list the MC and IPC reported in representative prior studies on STO-based RC (see Table~\ref{tab:comparison_spin}).
\section{Derivation of CRC}
We provide an explicit derivation of Eq.~(17) using the polynomials \(U(x)\) and \(g(x)\) defined in Methods.
The XOR-based polynomial long-division steps for computing the remainder \(r(x)= U(x)\bmod g(x)\) are summarized in Table~\ref{tab:crc_steps}, where \(\oplus\) denotes the XOR operation.
The final row of Table~\ref{tab:crc_steps} gives the three coefficients of \(r(x)\), and we define \(y_t\) as the corresponding 3-bit coefficient vector.
\begin{table}[H]
  \caption{XOR-based long-division steps for computing the CRC remainder.}
  \label{tab:crc_steps}
  \centering
  {\footnotesize
  \setlength{\tabcolsep}{2.5pt}
  \renewcommand{\arraystretch}{1.05} 
  \everymath{\textstyle}
  \begin{tabular}{@{}ccccc@{}}
    $u_t$ & $u_{t-1}$ & $u_{t-2}$ & $u_{t-3}$ & $u_{t-4}$ \\
    $u_t\!\cdot\!1$ & $u_t\!\cdot\!0$ & $u_t\!\cdot\!1$ & $u_t\!\cdot\!1$ & \\
    \cline{1-4}
    $u_t\oplus u_t$ & $u_{t-1}$ & $u_t\oplus u_{t-2}$ & $u_t\oplus u_{t-3}$ & $u_{t-4}$\\
    $(=0)$ & & & & \\
    & $u_{t-1}\!\cdot\!1$ & $u_{t-1}\!\cdot\!0$ & $u_{t-1}\!\cdot\!1$ & $u_{t-1}\!\cdot\!1$ \\
    \cline{2-5}
    & $u_{t-1}\oplus u_{t-1}$ & $u_t\oplus u_{t-2}$ & $u_t\oplus u_{t-1}\oplus u_{t-3}$ & $u_{t-1}\oplus u_{t-4}$\\
    &$(=0)$& & &\\
  \end{tabular}
  }
\end{table}
\begin{table*}[t]
    \centering
    \caption{IPCs of STO-based reservoirs. Note that STO stands for spin-torque oscillator.}
    \resizebox{\textwidth}{!}{
    \begin{tabular}{c|c|c|c|c|c|c|c|c|c}
        \label{tab:comparison_spin}
         & \multicolumn{2}{c|}{Input} & Input& RC & \# of total & Virtual & MC & Total & Reference \\ 
         & \multicolumn{2}{c|}{type} & modulation & framework & states & node & & IPC & \\ \hline \hline
         Experiment &  Binary & Voltage & Amplitude & Ensemble RC & 13,200 & Yes & 7.8 & 116 & This work\\
         & Binary & Voltage & Amplitude & Generalized RC & 100 & Yes & 4.3 & N/A & Kubota et al., 2025~\cite{GRC}\\
         & Real-valued & Voltage & Amplitude & RC & 200 & Yes & 2 & 5.6 & Tsunegi et al., 2023~\cite{Tsunegi_2023}\\
         & Binary & RF magnetic field & Phase & RC & 250 & Yes & 3.6 & N/A & Tsunegi et al., 2019~\cite{Tsunegi_2019}\\
         & Binary & Voltage & Amplitude & RC & 200 & Yes & 1.8 & N/A & Tsunegi et al., 2018~\cite{Tsunegi2018MemoryCapacitySTO}\\ \hline
         Simulation & Binary & Current & Amplitude & RC & 250 & Yes & 4 & N/A & Yamaguchi et al., 2023~\cite{Spin_Reservoir}\\
         & Real-valued & Magnetic field & Amplitude & RC & 100 & Yes & 21 & N/A & Akashi et al., 2022~\cite{Akashi_2022}\\
         & & (quasi-static) & & & & & & & \\
         & Real-valued & Current & Amplitude & RC & 3 & No & 0.8 & 2 & Akashi et al., 2020~\cite{Akashi_2020}
     \end{tabular}
     }
\end{table*}
\section{LIMITATIONS OF ERC}\label{sec:ERC_Limitation}
\subsection{Conditions for ERC applicability}
In the temporal fluctuations case, the ensemble average transformation requires phase diversity among trials. This requirement motivates the assumption that the state $x$ admits an expansion with independent random phases $\{\theta_j^{(l)}\}_{j=1}^{N_{\ast}}$: averaging over the dispersed phases eliminates the explicit time dependence in Eq.~{\rm (3)} (see Theorem S.1).
By contrast, when the states become spatially synchronized, their dependence on initial conditions disappears, leading to a collapse of phase diversity.
Random input induces synchronization of limit-cycle trajectories, eliminating this diversity (see Fig.~\ref{fig:comparing_noise_affection}a, upper and center).
To restore phase diversity, we introduce noise
(Fig.~\ref{fig:comparing_noise_affection}a, lower), rewriting Eq.~{\rm (3)} by replacing the random phase $\theta_j^{(l)}$ with a noise-induced phases $\{\psi_{t,j}^{(l)}\}_{j=1}^{N_{\ast}}$ as follows:
\begin{equation}
 \displaystyle
x(t,U,\boldsymbol{x}^{(l)}_0)\simeq\sum_{j=1}^{N_{*}}a_j\cos(f_j+\psi^{(l)}_{t,j})+b_j\sin(f_j+\psi^{(l)}_{t,j})+C.
    \label{eq:basic_trigometric_psi}
\end{equation}
Furthermore, if the noise dependence enters only through \(f_j\), the phase \(\psi_{t,j}^{(l)}\) can be written as
\begin{equation}
    \psi_{t,j}^{(l)} = \arctan\!\left(\frac{Y_j(V^{(l)})}{X_j(V^{(l)})}\right),
    \label{eq:noise_induced_phase}
\end{equation}
where \(X_j(V^{(l)})\) and \(Y_j(V^{(l)})\) are defined in Eq.~\eqref{eq:phase_XY}.
\begin{equation}
    \begin{split}
        &X_j(V^{(l)}) \coloneqq a_j\cos(f_j(V^{(l)})) + b_j\sin(f_j(V^{(l)})),\\
        &Y_j(V^{(l)}) \coloneqq -a_j\sin(f_j(V^{(l)})) + b_j\cos(f_j(V^{(l)})).
    \end{split}
    \label{eq:phase_XY}
\end{equation}
Since the input history is shared across all spatially multiplexed systems, all trial-to-trial variability is solely attributed to noise history $V^{(l)}:=\{v^{(l)}_t\}_{t\geq 0}$.
In particular, if $Y_j(V^{(l)})/X_j(V^{(l)})$ follows Cauchy distributions, $\psi^{(l)}_{t,j}$ is uniformly distributed (see Theorem~\ref{thm:normal_dis}). Such noise-induced phases restore phase diversity that would otherwise be lost due to spatial synchronization. As a result, ensemble averaging can extract time-invariant quantities even in the presence of temporal fluctuations.
\par
\begin{figure}[t]
    \includegraphics[width=0.4\textwidth]{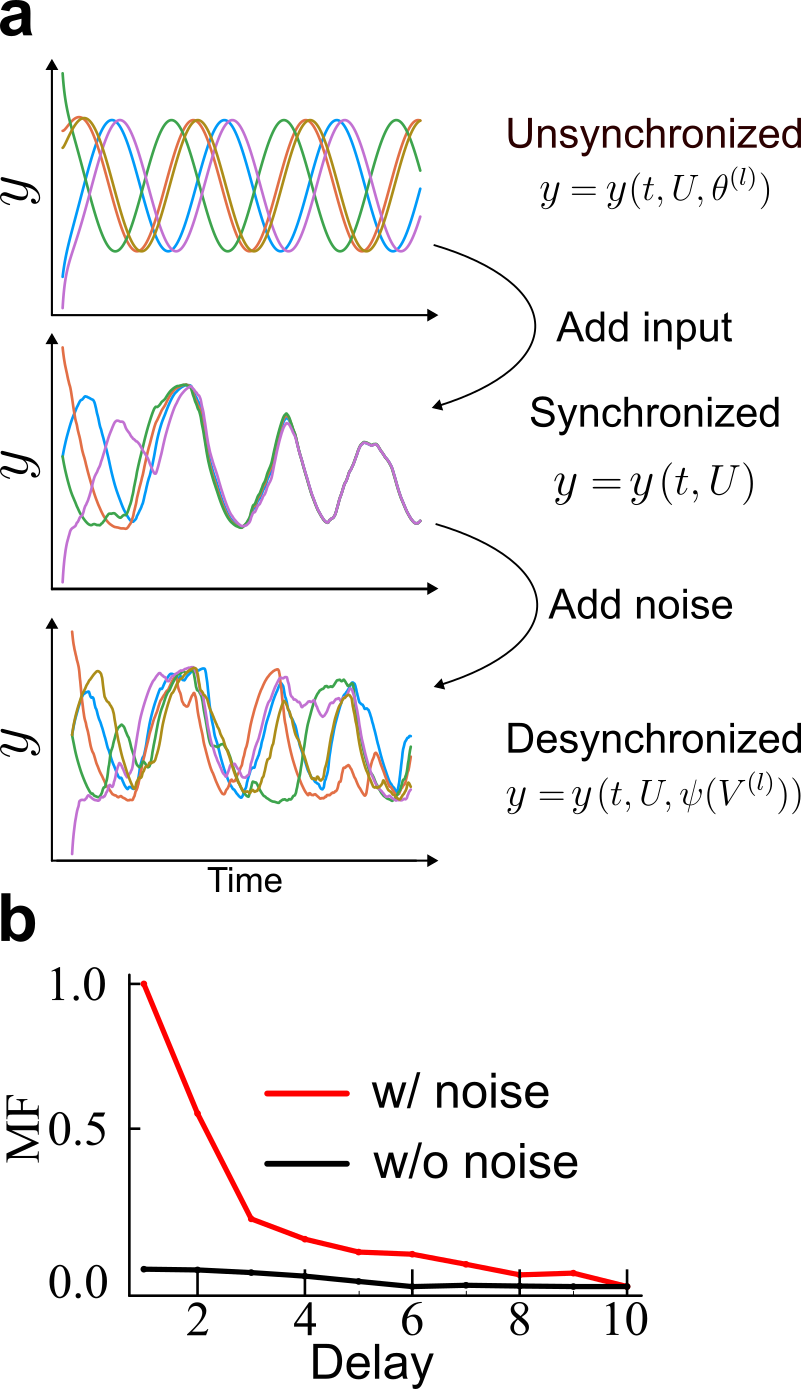}
    \caption{\textbf{Noise-induced desynchronization improves memory performance}.
    \textbf{a}) Synchronization comparison for the $y$-component of the Stuart–Landau system in Eq.~\eqref{eq:spin_SL_oscillator} under three conditions: without input (top), with input only (middle), and with both input and noise (bottom).
    \textbf{b}) The MF for the synchronized (black, noise-free) and desynchronized (red, noisy) cases. 
    The MF was evaluated using an ensemble of polynomial functions \(x, x^2, \dots, x^5\), with the number of spatially multiplexed systems $L=10^6$.
    }
    \label{fig:comparing_noise_affection}
\end{figure}
To numerically demonstrate how noise $v_t^{(l)}$ prevents input-induced synchronization and recovers computational capabilities, we use the Stuart--Landau system, which receives white noise as a discrete-time input $u_t$ in the $x$-direction as follows:
\begin{equation}
    \begin{split}
        &\dot{x} = \alpha x - \beta y - x(x^2 + y^2) + \sigma_1 u_t + \sigma_2 v^{(l)}_t, \\
        &\dot{y} = \beta x + \alpha y - y(x^2 + y^2),
    \end{split}
    \label{eq:spin_SL_oscillator}
\end{equation}
where $\alpha=1$, $\beta=1$, $\sigma_1=0.8$, and $\sigma_1=0.1$.
The number of spatially multiplexed systems is set to $L=10^6$ and the discrete time step is $\Delta t=0.01$. 
$v_t$ follows the normal distribution.
In the absence of noise, the random input $u_t$ induces synchronization among the oscillators.
Figure~\ref{fig:comparing_noise_affection}a shows the time series of the Stuart--Landau system under three conditions: neither input nor noise (upper), input without noise (center), and both input and noise (lower).
Without either noise or input, the oscillators remain unsynchronized.
When input is applied, the common forcing induces synchronization (center).
Finally, injecting different noise realizations into the spatially multiplexed systems breaks the spatially synchronization.
Figure~\ref{fig:comparing_noise_affection}b shows the MF of the Stuart--Landau oscillators without noise (black) and with noise (red); counterintuitively, the MF with noise outperforms that without noise.
Note that the MFs were calculated using ensemble averages of the first to fifth powers of $x$ ($\mathbb{E}[x], \ldots, \mathbb{E}[x^5]$).
The large MF observed with noise suggests that ERC is effective only for spatially multiplexed systems without synchronization, which is a limitation of the ERC framework.
However, even in the absence of synchronization, there remains a fundamental limit to the amount of input information that can be extracted.
\subsubsection{Disturbance collapses the synchronization}
Let us add random perturbations to a system that exhibits synchronization.
\begin{supptheorem}[Desynchronization by noise]\label{thm:normal_dis}
We assume that noise enters only through $f_j$ in Eq.~{\rm (3)}, and variables $Y_j(V^{(l)})/X_j(V^{(l)})$ follows Cauchy distribution.
Then, the noise-induced phase $\psi_{t,j}^{(l)}$ follows a uniform distribution.
Here, $X_j$ and $Y_j$ are defined in Eq.~\eqref{eq:phase_XY}.
\end{supptheorem}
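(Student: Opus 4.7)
The plan is to obtain the distribution of $\psi_{t,j}^{(l)}$ by applying the standard CDF-transformation argument to the defining relation $\psi_{t,j}^{(l)} = \arctan(Y_j(V^{(l)})/X_j(V^{(l)}))$. Because the assumption is that the ratio $Z := Y_j(V^{(l)})/X_j(V^{(l)})$ follows a Cauchy distribution, we only need to track how $\arctan$ pushes forward the Cauchy law, so the argument is essentially a single change-of-variables on the real line rather than a joint analysis of $(X_j,Y_j)$.

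First I would recall the standard Cauchy CDF $F_Z(z) = \tfrac{1}{\pi}\arctan(z) + \tfrac{1}{2}$, $z\in\mathbb{R}$, which is the cumulative distribution of the density $1/[\pi(1+z^2)]$ (allowing a straightforward adaptation for a general location--scale Cauchy, since the relevant scale can be absorbed into $X_j$, $Y_j$). Next, for $\psi_0 \in (-\pi/2, \pi/2)$ I would write
\begin{equation*}
\mathbb{P}\!\left(\psi_{t,j}^{(l)} \le \psi_0\right)
= \mathbb{P}\!\left(\arctan(Z) \le \psi_0\right)
= \mathbb{P}\!\left(Z \le \tan(\psi_0)\right)
= F_Z(\tan(\psi_0)),
\end{equation*}
where the middle equality uses the strict monotonicity of $\arctan$ on $\mathbb{R}$ and its inverse on $(-\pi/2, \pi/2)$. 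Substituting the Cauchy CDF collapses $\arctan(\tan(\psi_0))$ to $\psi_0$ and yields $\mathbb{P}(\psi_{t,j}^{(l)} \le \psi_0) = \psi_0/\pi + 1/2$, which is precisely the CDF of the uniform distribution on $(-\pi/2, \pi/2)$. Differentiating (or just reading off the CDF) gives the constant density $1/\pi$, establishing uniformity.

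The main obstacle is reconciling the range $(-\pi/2, \pi/2)$ of the principal branch of $\arctan$ with the range $(-\pi, \pi)$ used for $\theta_j^{(l)}$ in Theorem~\ref{thm:amsemble}(ii). To close this gap I would argue that the ensemble-averaging step only needs a phase distribution that is invariant under the integrals $\int \cos^{k}(\psi + \eta)\,d\psi$ producing constants, and this invariance already follows from uniformity on any half-period; alternatively, one can reinterpret $\arctan$ in Eq.~\eqref{eq:noise_induced_phase} as the four-quadrant $\mathrm{atan2}(Y_j, X_j)$, in which case the sign pattern of $(X_j, Y_j)$ supplies the additional $\pi$-shift. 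Under any symmetric assumption on the joint sign of $(X_j,Y_j)$ (for instance, that $(X_j, Y_j)$ has a rotationally symmetric joint law, a canonical example being independent centered Gaussians, which is precisely when $Y_j/X_j$ is Cauchy), the combined CDF computation extends verbatim and gives density $1/(2\pi)$ on $(-\pi,\pi)$, matching the hypothesis required by Theorem~\ref{thm:amsemble}(ii). I would flag this identification explicitly as the conceptual bridge between Theorem~\ref{thm:normal_dis} and the desynchronization conclusion that justifies Eq.~\eqref{eq:basic_trigometric_psi}.
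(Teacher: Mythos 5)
Your core argument is correct and is essentially the paper's own proof: the paper simply invokes the standard fact that $\tan$ of a uniform variable is Cauchy and inverts it, whereas you make the same step explicit through the Cauchy CDF $F_Z(z)=\tfrac{1}{\pi}\arctan(z)+\tfrac12$ and the monotone change of variables $Z\mapsto\arctan Z$. That establishes uniformity of $\psi_{t,j}^{(l)}$ on $(-\pi/2,\pi/2)$, which is all the theorem as literally stated claims, and you are right that the paper silently leaves the range mismatch with the $(-\pi,\pi)$ phases of Theorem~\ref{thm:amsemble}(ii) unaddressed.

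However, the first of your two proposed bridges is false. Uniformity on a half-period does \emph{not} make the averaging integrals $\eta$-independent: for $k=1$,
\begin{equation*}
\int_{-\pi/2}^{\pi/2}\cos(\theta+\eta)\,d\theta=\sin\!\left(\tfrac{\pi}{2}+\eta\right)-\sin\!\left(-\tfrac{\pi}{2}+\eta\right)=2\cos\eta,
\end{equation*}
which retains the time-variant factor $\eta=\eta(t,U)$; the same failure occurs for every odd $k$, and it is precisely the vanishing of the odd-power terms over a full $2\pi$ period (Eq.~\eqref{eq:cos_prod_integral}) that drives the proof of Theorem~\ref{thm:amsemble}. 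So the phase must genuinely be uniform on an interval of length $2\pi$, not $\pi$. Your second bridge is the correct one: reinterpret Eq.~\eqref{eq:noise_induced_phase} as the four-quadrant $\mathrm{atan2}(Y_j,X_j)$ and impose a symmetry on the joint sign of $(X_j,Y_j)$ (e.g.\ a rotationally symmetric joint law, for which $Y_j/X_j$ is automatically Cauchy), yielding density $1/(2\pi)$ on $(-\pi,\pi)$. You should drop the half-period claim and state the extra symmetry hypothesis explicitly, since it is not implied by the Cauchy assumption on the ratio alone.
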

\begin{proof}
    From the assumption, $Y_j(V^{(l)})/X_j(V^{(l)})$ follows a Cauchy distribution.
    It is a standard fact that if $R_j$ follows a uniform distribution, then $\tan R_j$ follows the Cauchy distribution. Therefore, the noise-induced phase $\psi_{t,j}^{(l)} = \arctan(R_j^{(l)})$ follows a uniform distribution.
\end{proof}
\subsection{Limits on extractable information}
We characterize the extractable time-invariant quantities using the IPC.
In this study, we derive two limitations on the input information that can be extracted by the ERC.
First, for any $\phi$ satisfying the assumptions of Theorem~\ref{thm:amsemble}, $\mathbb{E}[\phi(x)]$ is expressed as a function of $r_k\coloneqq \sqrt{a_k^2+b_k^2}$ (for $k=1,\ldots,N_{\ast}$) and $C$, where $N_{\ast}$ is equal $N$ in Eq.~\eqref{approx:x_t_U_x0}.
In addition, to determine $r^2_1,\ldots,r^2_{N_{\ast}}$ and $C$, they must be functions of the moments $\mathbb{E}[x], \mathbb{E}[x^2], \ldots, \mathbb{E}[x^{2N_{\ast}}]$.
Second, if $d_{\rm tot} < d(N_{\ast}+1)$, all IPCs can reach unity.
Here, $d_{\rm{tot}}\in\N$ is the total number of polynomial terms used to represent the function of the input history.
To formalize the first limitation, we next introduce an algebraic characterization based on the elementary symmetric polynomials (Theorem~\ref{thm:average}) and their inversion via Vieta's formulas (Remark~\ref{rem:vieta}).
\begin{supptheorem}[Elementary symmetric polynomials and even-powers ensemble]\label{thm:average}
Let $r_1^2,\ldots,r_{N_\ast}^2$ be the time-invariant amplitudes appearing in Eq.~\eqref{eq:composition_form}. Then, the elementary symmetric polynomials $\{e_m\}_{m=1}^{N_\ast}$ of $r_1^2,\ldots,r_{N_\ast}^2$ can be expressed as functions of the even-order ensemble moments $\{\mathbb{E}[x^{2m}]\}_{m=1}^{N_\ast}$.
\end{supptheorem}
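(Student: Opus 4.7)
The plan is to combine the explicit formula for $\mathbb{E}[x^{2m}]$ already derived in the proof of Theorem~\ref{thm:amsemble} with the fundamental theorem of symmetric polynomials and a triangular inversion in the power sums $p_k\coloneqq\sum_{j=1}^{N_\ast}r_j^{2k}$. First I would specialize Eq.~\eqref{eq:ri_Prod} to $n=2m$ and reparameterize $l_j=2m_j$. Parity constraints force the summation index $k$ to be even (so $C$ enters only through $C^{2},C^{4},\ldots$), and the resulting expression is manifestly symmetric in $r_1^2,\ldots,r_{N_\ast}^2$ with total degree $m$. Since the $p_k$ generate the ring of symmetric polynomials over $\mathbb{Q}$, this rewriting yields a representation
\begin{equation*}
\mathbb{E}[x^{2m}]=Q_m(p_1,\ldots,p_m;\,C)
\end{equation*}
for an explicit polynomial $Q_m$.

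Next I would isolate the leading power-sum contribution. The unique term in which one coordinate satisfies $l_j=2m$ (others zero) with $k=0$ contributes the monomial $r_j^{2m}$; summing over $j$ produces $p_m$ multiplied by the strictly positive coefficient $\binom{2m}{m}4^{-m}$, after absorbing the $(2\pi)^{N_\ast}$ normalization into the expectation. Every remaining contribution either spreads the exponent across two or more indices or carries $k\ge 2$; a direct degree count shows both possibilities give symmetric polynomials of total degree strictly less than $m$ in the $r_j^2$. Consequently
\begin{equation*}
\mathbb{E}[x^{2m}]=\alpha_m\,p_m+R_m(p_1,\ldots,p_{m-1};\,C),\qquad \alpha_m\neq 0,
\end{equation*}
with $R_m$ a computable polynomial depending only on lower-order power sums and on $C$.

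With this triangular structure in hand, the rest is mechanical. The constant $C$ is recovered from the identity $\mathbb{E}[x]=C$ already used inside the proof of Theorem~\ref{thm:amsemble} (the $n=1$ case of Eq.~\eqref{eq:ri_Prod}). Induction on $m=1,\ldots,N_\ast$ then determines each $p_m$ as a rational function of $\mathbb{E}[x],\mathbb{E}[x^2],\ldots,\mathbb{E}[x^{2m}]$ by dividing by $\alpha_m$, and Newton's identities $k\,e_k=\sum_{i=1}^{k}(-1)^{i-1}e_{k-i}\,p_i$ convert $p_1,\ldots,p_{N_\ast}$ into the elementary symmetric polynomials $e_1,\ldots,e_{N_\ast}$, completing the derivation.

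The hard part, I expect, is the combinatorial bookkeeping: rigorously certifying that $R_m$ depends only on $p_1,\ldots,p_{m-1}$ (and not on $p_k$ for $k\ge m$) together with the verification that $\alpha_m$ is nonzero. Both reduce to a careful reading of Eq.~\eqref{eq:ri_Prod}, yet they are the steps on which the inductive scheme closes. A secondary stylistic choice is whether to phrase the conclusion conditional on knowing $C$ (natural under $C=0$, as in the Stuart--Landau example) or to fold the extraction $C=\mathbb{E}[x]$ explicitly into the statement; either convention makes the correspondence between $\{e_m\}$ and the moments $\{\mathbb{E}[x^{2m}]\}$ precise.
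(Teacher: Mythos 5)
Your proposal is correct and follows essentially the same route as the paper's proof: both exploit the fact that Eq.~\eqref{eq:ri_Prod} makes $\mathbb{E}[x^{2m}]$ a symmetric polynomial of degree $m$ in $r_1^2,\ldots,r_{N_\ast}^2$ whose leading part is triangular in $m$, and then invert inductively. The only differences are cosmetic and work in your favor: you pass through the power sums $p_m$ and Newton's identities rather than targeting the $e_m$ directly, and you make explicit two points the paper leaves implicit, namely that the leading coefficient $\alpha_m=\binom{2m}{m}4^{-m}$ is nonzero and that the constant $C$ must be eliminated via $C=\mathbb{E}[x]$ before the inversion (as the paper itself does in its worked example $e_1 = 2(\mathbb{E}[x^2]-\mathbb{E}[x]^2)$ but not in the schematic lines $e_1 = d_1\mathbb{E}[x^2]$, etc., of the proof).
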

\begin{proof}
We denote by $e_1,\ldots,e_{N_\ast}$ the elementary symmetric polynomials of $r_1^2,\ldots,r_{N_\ast}^2$ (i.e., $e_1 = r_1^2 + \cdots + r_{N_\ast}^2$, $e_2 = \sum_{i<j} r_i^2 r_j^2$, $e_3 = \sum_{i<j<k} r_i^2 r_j^2 r_k^2$, \ldots).
First, as shown in Eqs.~\eqref{eq:composition_form} and \eqref{eq:ri_Prod}, $\mathbb{E}[x^n]$ is expressed as linear summations of finite products of $r_1,\ldots, r_{N_{\ast}}$.
Since the permutation of $r_1,\ldots,r_{N_\ast}$ does not affect Eq.~\eqref{eq:ri_Prod}; therefore $\mathbb{E}[x^n]$ is symmetric in these variables.
\par
Next, we prove that the symmetry of $\mathbb{E}[x^n]$ in Eq.~\eqref{eq:ri_Prod} can be reduced to symmetry with respect to $r_1^2,\ldots,r_{N_\ast}^2$.
The integrals in Eq.~\eqref{eq:ri_Prod}, vanish for odd exponents from Eq.~\eqref{eq:cos_prod_integral}.
This implies that the symmetry with respect to $r_1,\ldots, r_{N_{\ast}}$ is replaced by the symmetry with respect to $r^2_1,\ldots, r^2_{N_{\ast}}$.
Thus, $\mathbb{E}[x^n]$ can be expressed as a function of $e_1,\ldots,e_{N_{\ast}}$. 
\par
Finally, we prove that $e_{1},\ldots,e_{N_{\ast}}$ can be expressed as functions of $\{\mathbb{E}[x^{2m}]\}_{m=1}^{N_{\ast}}$.
From the definition of $e_1,\ldots,e_{N_{\ast}}$, the $e_l$ has polynomial order $2l$. Consequently, by matching polynomial orders, \(\mathbb{E}[x^{2m}]\) must take the form
\[
\mathbb{E}[x^{2m}] = d_m e_m + f(e_{m-1},\dots,e_1),
\]
where \(f\) is a polynomial in \(e_1,\dots,e_{m-1}\).
Considering the cases sequentially from $m = 1 $ to $ N_{\ast}$, one obtains the following expressions:
\begin{align*}
    e_1 &=d_1 \mathbb{E}[x^2],\\
    e_2 &=d_2 \mathbb{E}[x^4]-f_2(\mathbb{E}[x^2]),\\
    &\vdots\\
    e_{N_{\ast}} &= d_m\mathbb{E}[x^{2N_{\ast}}] - f_{N_{\ast}}(\mathbb{E}[x^2],\mathbb{E}[x^4],\ldots,\mathbb{E}[x^{2N_{\ast}-2}]),
\end{align*}
where $d_i$ is a constant (for $i=1\ldots N_{\ast}$).
Therefore, $e_1,\ldots,e_{N_{\ast}}$ are expressed as functions of $\{\mathbb{E}[x^{2m}]\}_{m=1}^{N_{\ast}}$.
\end{proof}
\begin{supprem}[Vieta's formulas~\cite{BlumSmithCoskey2017}]\label{rem:vieta}
Let $M \in \N$, and let $\alpha_1, \ldots, \alpha_M \in \R$.
Denote by $q_1, \ldots, q_M$ the elementary symmetric polynomials in $\alpha_1, \ldots, \alpha_M$.
Define the monic polynomial
$p(t) := t^M - q_1 t^{M-1} + q_2 t^{M-2} - \cdots + (-1)^M q_M$.
Then, $\alpha_1, \ldots, \alpha_M$ are exactly the roots of the equation $p(t)=0$, and hence
$p(t) = \prod_{i=1}^M (t - \alpha_i)$.
\end{supprem}
We now elaborate on these limitations. For the first limitation, we focus on the functional representation of $r^2_1,\ldots,r^2_{N_{\ast}}$ and $C$.
First, we derive a functional representation of the ensemble average as follows:
\begin{equation}
    \mathbb{E}[\phi(x_k)]=\mathcal{F}_{\phi}(r^2_1,\ldots,r^2_{N_{\ast}},C),
    \label{eq:limitation_1}
\end{equation}
where the observation function $\phi$ satisfies the assumptions of Theorem~\ref{thm:amsemble} and the map $\mathcal{F}_{\phi}:\mathbb{R}^{N_{\ast}+1}\to\mathbb{R}$ is determined by $\phi$.
From Theorem~\ref{thm:average} and Remark~\ref{rem:vieta}, there exists a function $G_{i}:\mathbb{R}^{N_{\ast}}\rightarrow \mathbb{R}$ such that
\begin{equation*}
    \begin{split}
        &r^2_i=G_{i}(\mathbb{E}[x_k^2],\ldots,\mathbb{E}[x_k^{2N_{\ast}}])\ (i=1,\ldots,N_{\ast}),\\
        &C=\mathbb{E}[x_k],
    \end{split}
\end{equation*}
which restricts the functional representations of $r^2_1,\ldots,r^2_{N_{\ast}}$ and $C$ to functions of only $\{\mathbb{E}[x^{2m}]\}_{m=1}^{N_{\ast}}$.
\par
The second limitation concerns the recovery of processed input.
In our theory, we focus on a single variable from the $d$-dimensional state space and on $(N_{\ast}+1)$ variables ($r_1^2,\ldots,r_{N_{\ast}}^2$ and $C$), which are represented by ensemble averages.
To extract all processed inputs for $d=1$, the condition $d_{\rm tot} < N_{\ast}+1$ must be satisfied, based on the relationship between the number of equations and unknowns.
So far, we have focused on a single direction; however, other variables are also available for extracting the processed inputs.
Using all $d$ dimensions, the number of equations increases, yielding $d(N_{\ast}+1)\;\geq\; d_{\rm tot}$.
\par
To illustrate these two limitations, we adopt the Stuart--Landau model in Eq.~{\rm (4)} as an example, for which corresponds to $N_{\ast}=1$.
First, when we focus only on the variable $x$ and apply the observation function $\phi=x^n$, the functional representation of $\mathbb{E}[\phi(x)]$ is described by
\begin{equation}
    \mathbb{E}[x^n]=
    \left\{
    \begin{array}{cc}
    \displaystyle \mathbb{E}[x] & (n\mbox{ is odd})\\
    \displaystyle \binom{n}{n/2}\left(\frac{\mathbb{E}[x^2]}{2}\right)^{\frac{n}{2}} & (n\mbox{ is even})
    \end{array}
    \right.,
    \label{eq:phi_SLsystem_modify}
\end{equation}
which is represented by a function of only $\mathbb{E}[x]$ and $\mathbb{E}[x^2]$.
Second, the functional representations of $r^2$ and $C$ become functions of only $\mathbb{E}[x]$ and $\mathbb{E}[x^2]$ as follows:
\begin{align*}
    r^2=\mathbb{E}[x^2],\ C=\mathbb{E}[x].
\end{align*}
\begin{figure*}
    \centering
    \includegraphics[width=\textwidth]{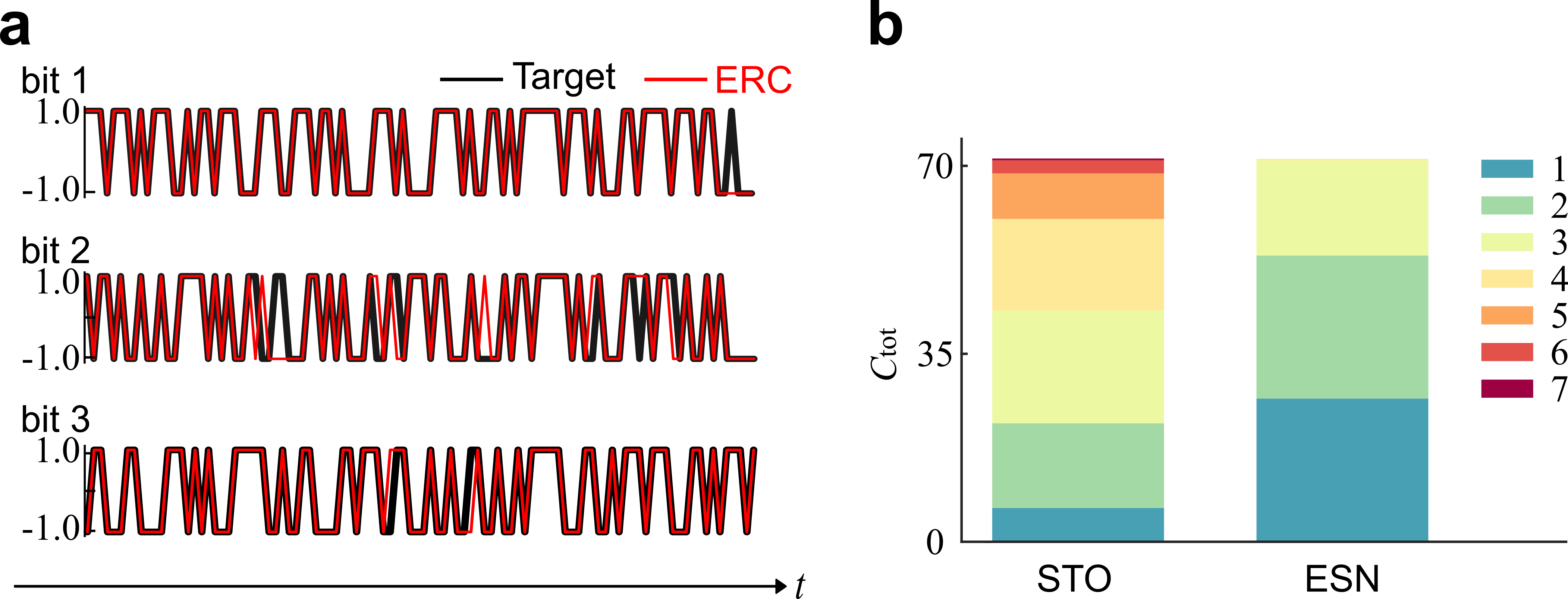}
    \caption{
    \textbf{CRC task performance and IPC using ERC with ADC.}
    \textbf{a}) 
    Time series comparison between the target (black) and ERC (red) signals of the CRC task.
    We use $\phi\in \{{\rm ADC}_2(x),{\rm ADC}_3(x),{\rm ADC}_4(x),{\rm ADC}_5(x)\}$ as nonlinear functions to solve the CRC task.
    \textbf{b}) IPC of the STO states used to solve the CRC task (left) and that of an ESN with comparable computational capability (right).
    }
    \label{fig:spin_task_with_ADC}
\end{figure*}
Finally, from the variables $r^2$ and $C$, we extract processed inputs (e.g. $u_{t-1},u_{t-2},u_{t-1}u_{t-2}$).
In this example, only one independent equation is available for the input history when $C=0$. Thus, the processed input can be identified only when $d_{\rm tot} = 1$.
However, all IPCs in the system may still be fully extracted by increasing the system dimension; for example, we can add a second variable of $y$ to the ensemble average. In addition, we can use the Stuart--Landau model with different parameters in Eq.~{\rm (4)}.
In this model, the input polynomial cannot be explicitly expressed in terms of the ensemble average. An illustrative example is instead provided using another model.
\par
We demonstrate the limitations of ERC using a $3$-dimensional system $\{x_i\}_{i=1}^{3}$.
The form of each direction is given as follows:
\[
x_i = \sum_{j=1}^{N_{\ast}}r_{i,j}(U)\cos(\theta_{i,j}+\eta_{i,j}(t,U))+C_{i}(U),
\]
where \(\eta_{i,j}\) depends on time and input history, \(\theta_{i,j}\) is uniformly distributed on \([-\pi,\pi]\), and \(r_{i,j}\) denotes the time-invariant quantities defined as follows:
\begin{equation}
    \begin{aligned}
        &r^2_{1,1}\coloneqq {u_{t-1}+u_{t-2}},\\
        &r^2_{1,2}\coloneqq {u^2_{t-1}+u^2_{t-2}+u_{t-2}},\\ &r^2_{1,3}\coloneqq u^2_{t-1}+u^2_{t-2},\ C_{1}\coloneqq 1,\\
        &r^2_{2,1}\coloneqq {u^2_{t-1}+u_{t-3}+u_{t-1}u_{t-2}},\\
        &r^2_{2,2}\coloneqq u_{t-2}+u_{t-3}+u_{t-4},\\
        &r^2_{2,3}\coloneqq u_{t-2}+u_{t-4}+u_{t-1}u_{t-2}+u^2_{t-1},\\
        &C_2 \coloneqq 1,\\
        &r^2_{3,1}\coloneqq {u_{t-1}u_{t-2}+u_{t-5}},\\
        &r^2_{3,2}\coloneqq u^2_{t-2}-u_{t-1}u_{t-2}-u_{t-5},\\
        &r^2_{3,3}\coloneqq u^2_{t-1}+u^2_{t-4}-u_{t-5},\ C_3 \coloneqq 1.
    \end{aligned}
\label{eq:invatiants}
\end{equation}
Then, $e_1,e_2$, and $e_3$ are given as follows:
\begin{equation}
    \begin{split}
        &e_1=2\left(\mathbb{E}[x^2]-\mathbb{E}[x]^2\right),\\
        &e_2=\frac{4}{3}\left[\mathbb{E}[x^4]-\left(\frac{3}{8}e_1^2+3e_1\mathbb{E}[x]^2+\mathbb{E}[x]^4\right)\right],\\
        &e_3=\left[\mathbb{E}[x^6]-\frac{1}{15}\left(\frac{5}{16}e_1^3+\right.\right.\\
        &~~~~~~~\frac{15}{8}e_1e_2+\frac{15}{8}e_1^2\mathbb{E}[x]^2+\frac{45}{8}e_1^2\mathbb{E}[x]^2+\\
        &~~~~~~~\left.\left.\frac{45}{4}e_2\mathbb{E}[x]^2+
        \frac{15}{2}e_1^2\mathbb{E}[x]^4+\mathbb{E}[x]^6\right)\right].
    \end{split}
\label{eq:symmetric_polynomials}
\end{equation}
The numbers of independent input variables in the $x_1$, $x_2$, and $x_3$ directions are $4$, $5$, and $5$, respectively, giving a total of $9$ distinct variables in the $3$-dimensional system.
We derive explicit forms of $\mathbb{E}[\phi(x_i)]$ (without the subscript~$i$) for $\phi\in\{x,x^2,\ldots,x^8\}$ as follows:
\begin{equation}
    \begin{split}
        &\mathbb{E}[x]=1,\ \mathbb{E}[x^2]=\frac{e_1}{2}+1,\ \mathbb{E}[x^3]=\frac{3}{2}e_1+1,\\
        &\mathbb{E}[x^4]=\frac{3}{8}e_1^2+2e_2+8e_1+1,\\
        &\mathbb{E}[x^5]=\frac{5}{8}(3e_1^2-2e_2+8e_1)+1,\\
        &\mathbb{E}[x^6]=\frac{5}{16}e_1^3+\frac{15}{8}e_1e_2+\frac{45}{8}e_1^2+\\
        &~~~~~~~~~~~\frac{45}{4}e_2+\frac{15}{2}e_1+1+15e_3,\\
        &\mathbb{E}[x^7]=\left[\frac{5}{16}e_1^3+\frac{305}{16}e_1e_2+\frac{225}{256}e_3\right]+\\
        &~~~~~~~~~~~\left[\frac{315}{8}e_1^2-\frac{105}{4}e_2\right]+
        \frac{21}{2}e_1+1,\\
        &\mathbb{E}[x^8]=
        \left[\frac{35}{2^7}(e_1^4-4e_1^2e_2+e_1e_3)+\frac{97}{2^7}\right]e_2+\\
        &~~~~~~~~~~~
        \left[\frac{35}{4}e_1^3-\frac{35}{2}(-e_1e_2+3e_3)\right]+\\
        &~~~~~~~~~~~
        \left[\frac{105}{4}e_1^2-\frac{105}{2}e_2\right]+
        \frac{e_1}{2}+1.
    \end{split}
     \label{eq:powere_ensemble}
\end{equation}
The quantities are symmetric with respect to $r^2_1,r^2_2$, and $r^2_3$ and are expressed as a function of $\mathbb{E}[x^6],\mathbb{E}[x^4],\mathbb{E}[x^2]$, and $\mathbb{E}[x]$.
Therefore, each $\mathbb{E}[\phi(x)]$ in Eq.~\eqref{eq:powere_ensemble} is a function of $\mathbb{E}[x^6]$, $\mathbb{E}[x^4]$, $\mathbb{E}[x^2]$, and $\mathbb{E}[x]$ for any $\phi\in\{x,x^2,\ldots,x^8\}$.
Consequently, values of $r_1^2$, $r_2^2$, and $r_3^2$ can be obtained as solutions of the following cubic equation:
\begin{align*}
    t^3-e_1t^2+e_2t-e_3=0\Leftrightarrow (t-r^2_{i,1})(t-r^2_{i,2})(t-r^2_{i,3})=0.
\end{align*}
Because the condition $d_{\mathrm{tot}} \leq N_\ast+1$, is not satisfied for a single state, complete recovery from one state is impossible.
However, partial reconstruction remains possible from subsets of the invariants.
\par
Typically, in the IPC framework, the processed input can be expanded in terms of orthogonal polynomials. Here, we present an example expanded using nonorthogonal polynomials:
\begin{align*}
    &u_{t-1}=r^2_{1,1}-r^2_{1,2}+r^2_{1,3},\ u_{t-2}=r^2_{1,2}-r^2_{1,3},\\
    &u_{t-3}=\frac{1}{2}r^2_{2,1}+\frac{1}{2}r^2_{2,2}-\frac{1}{2}r^2_{2,3},\\
    &u^2_{t-2}=r^2_{3,1}+r^2_{3,2}.
\end{align*}
From a single direction, only the above variables can be partially extracted. 
However, when further directions are taken into account, additional variables can be extracted as follows:
\begin{align*}
    &u_{t-4}=-r^2_{1,2}+r^2_{1,3}-\frac{1}{2}r^2_{2,1}+\frac{1}{2}r^2_{2,2}+\frac{1}{2}r^2_{2,3},\\
    &u^2_{t-1}=r^2_{1,3}-r^2_{3,1}-r^2_{3,2},\\
    &u^2_{t-4}=-\frac{1}{2}r^2_{2,1}+\frac{1}{2}r^2_{2,2}-\frac{1}{2}r^2_{2,3}+r^2_{3,1}+r^2_{3,3}.
\end{align*}
If we consider all three dimensions in Eq.~\eqref{eq:invatiants} simultaneously, the remaining input variables can be reconstructed as follows:
\begin{equation*}
    \begin{split}
        &u_{t-5}=r^2_{1,3}-\frac{1}{2}r^2_{2,1}+\frac{1}{2}r^2_{2,2}-\frac{1}{2}r^2_{2,3}-r^2_{3,2},\\
        &u_{t-1}u_{t-2}=-r^2_{1,3}+\frac{1}{2}r^2_{2,1}-\frac{1}{2}r^2_{2,2}+\frac{1}{2}r^2_{2,3}+r^2_{3,1}+r^2_{3,2},
    \end{split}
\end{equation*}
These results indicate that considering more directions progressively enables the extraction of additional variables, revealing the information embedded within the system.
\section{Comparison of an STO with an ESN}
We solved the CRC task using an experimental STO signal obtained from ERC.
Figure~\ref{fig:spin_task_with_ADC}a shows the performance of the CRC task with 2-, 3-, 4-, and 5-bit ADC($x$) as nonlinear functions.
The precision reached $94.8\%$, corresponding to the IPC of an ESN with $\rho \approx 0.6$ and $d =118$ (Fig.~\ref{fig:spin_task_with_ADC}b).
\section{TIPC computation procedure}
We provide the details of TIPC computation procedure in detail in Eq.~{\rm (15)}.
First, we scale the system such that the standard deviation of state $x$ is $1$.
The temporal average $\langle x\rangle$ and quadratic temporal average $\langle x^2\rangle$ are given as follows:
\begin{equation*}
    \begin{split}
        &\langle x\rangle=\frac{1}{2\pi}\int_{0}^{2\pi}x(s)ds=0,\ \langle x^2\rangle=\frac{1}{2\pi}\int_{0}^{2\pi}x^2(s)ds=\frac{11}{15}.
    \end{split}
\end{equation*}
We define the inner products of time $\langle\cdot,\cdot\rangle_{t}$ and input history $\langle\cdot,\cdot\rangle_{u}$ as:
\begin{equation*}
    \begin{split}
        &\langle f,g\rangle_t\coloneqq \frac{1}{2\pi}\int_{0}^{2\pi}f(s)g(s)ds,\ 
        \langle f,g\rangle_u\coloneqq \frac{1}{2}\int_{-1}^{1}f(s)g(s)ds.
    \end{split}
\end{equation*}
Second, we orthonormalize the orthogonal polynomial bases using the inner-products above:
\begin{equation*}
    \begin{split}
        &\psi_{\cos}\coloneqq \sqrt{6} P_1(u_{t-1})\cos t,\ \psi_{\sin}\coloneqq \sqrt{10} P_2(u_{t-2})\sin 2t,\\
        &\psi_{u_{\rm quad}}\coloneqq 3P_1(u_{t-1})P_1(u_{t-2}),\ \psi_{u_{\rm linear}}\coloneqq \sqrt{3}P_1(u_{t-1}),
    \end{split}
\end{equation*}
where $P_i$ denotes the $i$-th Legendre polynomials: $P_0(x)=1$, $P_1(x)=x$ and $P_2(x)=(3x^2-1)/2$.
We expand the scaled state $\tilde{x}\coloneqq (x-\langle x\rangle)/\sqrt{\langle x^2\rangle}$ using $\psi_{\cos}$, $\psi_{\sin}$, $\psi_{\rm linear}$, and $\psi_{\rm quad}$ as follows:
\begin{equation}
    \begin{split}
        \tilde{x}=&\sqrt{\frac{32}{45}}(u_{t-1}\cos{t}+u^2_{t-2}\sin{2t}+u_{t-1}u_{t-2}+u_{t-1})\\
        =&\left(\sqrt{\frac{15}{64}}\psi_{\cos}+\frac{1}{4}\psi_{\sin}+\right.\\
        &\left.\sqrt{\frac{5}{32}}\psi_{u_{\rm quad}}+\sqrt{\frac{15}{32}}\psi_{u_{\rm linear}}+\sqrt{\frac{5}{64}}\sin2t\right).
    \end{split}
    \label{eq:tipc_expansions}
\end{equation}
Figure~\ref{fig:TIPC_16} shows the IPC and TIPC obtained using Eq.~{\rm (15)}.
Each segment of the bar corresponds to a basis term used in the expansion in Eq.~\eqref{eq:tipc_expansions}, and its height equals the squared value of the associated expansion coefficient.
\begin{figure}[t]
    \includegraphics[width=0.48\textwidth]{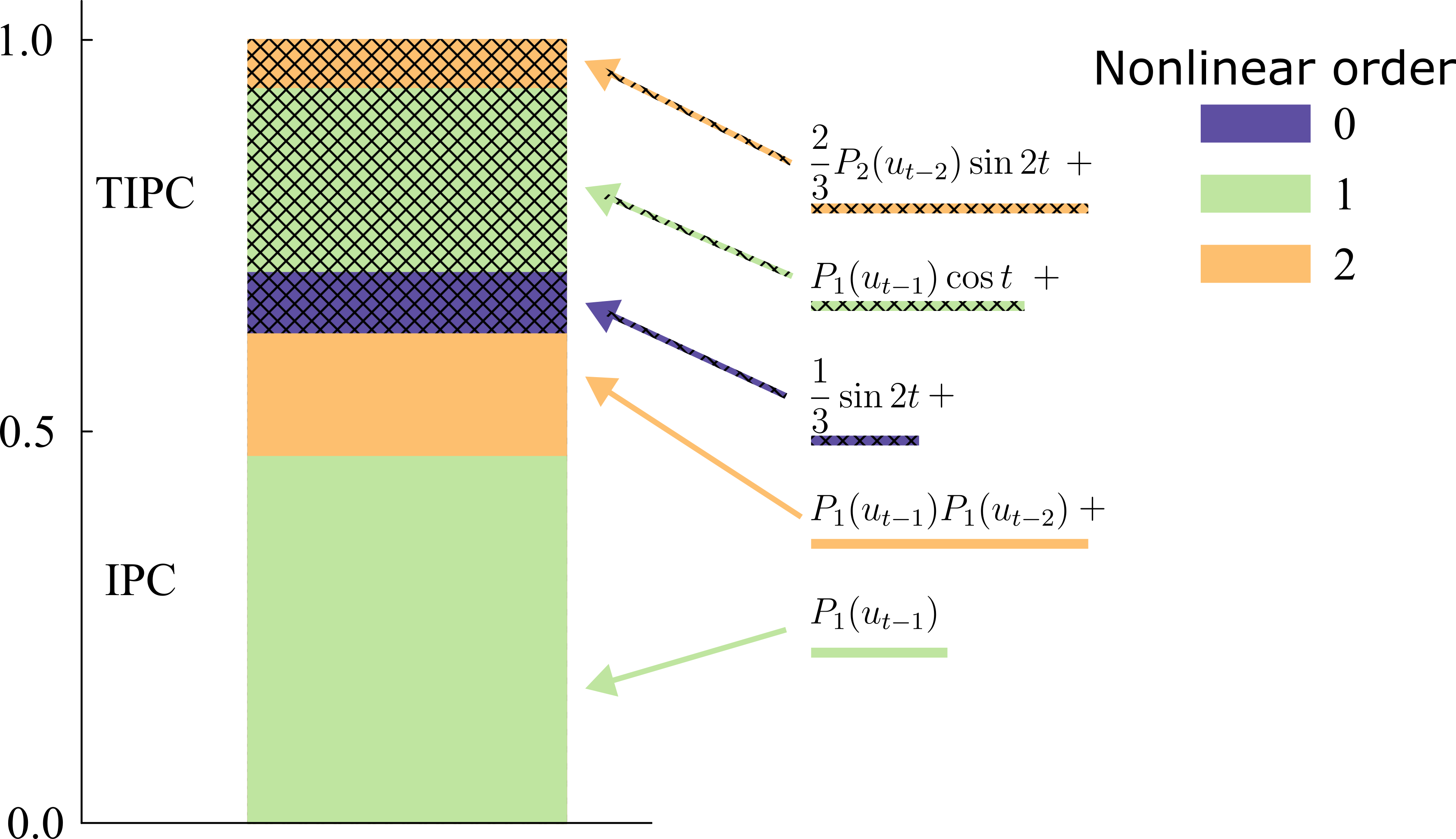}
    \caption{\textbf{Illustration of IPC and TIPC decomposition for the system in Eq.~\textbf{(15)}.}
    Bars show the total capacity with separation between the time-invariant (plain) and time-variant (hatched) components. Colors indicate the nonlinear input order: 0 (purple), 1 (green), and 2 (orange).}
    \label{fig:TIPC_16}
\end{figure}

\section{References}
\bibliographystyle{naturemag}
\bibliography{apssamp.bib}

\end{document}